\newtheorem{assumption}{Assumption}
\newcommand{\defeq}{\vcentcolon=}
\DeclarePairedDelimiter{\ceil}{\lceil}{\rceil}
\newtheorem{theorem}{Theorem}[section]
\theoremstyle{definition}
\newtheorem{definition}{Definition}[section]
\newtheorem{lemma}{Lemma}[section]
\newtheorem{corollary}{Corollary}[section]
\newtheorem{example}{Example}[section]
\begin{document}
\title{Uncertainty quantification for subgradient descent, with applications to relaxations of discrete problems.}
\author{Conor McMeel\\ Imperial College London
	\and Panos Parpas\\ Imperial College London}
\maketitle              
\begin{abstract}
We consider the problem of minimizing a convex function that depends on an uncertain parameter $\theta$. The uncertainty in the objective function means that the optimum, $x^*(\theta)$, is also a function of $\theta$. We propose an efficient method to compute $x^*(\theta)$ and its statistics. We use a chaos expansion of $x^*(\theta)$ along a truncated basis and study a restarted subgradient method that compute the optimal coefficients.
  We establish the convergence rate of the method as the number of basis functions increases, and hence the dimensionality of the optimization problem is increased.
 We give a non-asymptotic convergence rate for subgradient descent, building on earlier work that looked at gradient and accelerated gradient descent. Additionally, this work explicitly deals with the issue of projections, and suggests a method to deal with non-trivial projections. We show how this algorithm can be used to quantify uncertainty in discrete problems by utilising the (convex) Lovasz Extension for the min s,t-cut graph problem. 

\end{abstract}
\section{Introduction}
	Gradient descent is one of the most fundamental and well-known optimisation methods, and is used throughout industry and other scientific fields in various applications \cite{hochreiter2001learning,zhao2015fast,wang2008method}.
	
	
	In applied contexts, we accept that there will be uncertainty as we use gradient descent \cite{hu2017uncertainty,papi2020uncertainty}. We consider the specific case where our objective function of interest $f(x): \mathbb{R}^q \rightarrow \mathbb{R}$ cannot be evaluated, but we have access to a function $F$ such that:
	\begin{equation}
	    f(x) = \mathbb{E}\left(F(x,v)\right).
	\end{equation}
	We then introduce uncertainty via a parameter $\theta \in \mathbb{R}^d$: we take our normal objective $f(x) : \mathbb{R}^q \rightarrow \mathbb{R}$, and we introduce a parameter of uncertainty $\theta$, itself being $d$-dimensional and arising from a distribution $\pi$. Due to this change, the input $x$ is now a function of $\theta$, which we assume is in $L^2_{\pi}$. This impacts the above Equation like so:
	\begin{equation}
	    f(x(\theta),\theta) = \mathbb{E}\left(F(x(\theta),\theta,v)\right)
	\end{equation}
	There are three sources of uncertainty we are interested in quantifying:
	\begin{enumerate}
	    \item The distribution $\mu$ of the variable $v$ could depend on $\theta$.
	    \item The function $F$ may depend on $\theta$.
	    \item We may not know $\theta$, but assume we know a distribution it comes from due to a priori knowledge of the problem.
	\end{enumerate}
	Its our aim to quantify this uncertainty. We can see this function has a different optimum $x^*(\theta)$ for each value of $\theta$. In our case, we want to learn about $x^*$ as a function of $\theta$, or at least its statistics. Specifically, we want to find a $x^*(\theta)$ such that:
	\begin{equation}
		x^*(\theta) = \arg \min_{x} \pi(f) = \int_{V} F(x(\theta), \theta,v) \mu(\theta, dv).
	\end{equation}
	for $\pi$-almost all $\theta$. Therefore we are looking for a function $x^*(\theta)$ such that for $\pi$-almost all $\theta$ we have that $f(x^*(\theta),\theta)$ is a minimum when we fix $\theta$. One concrete example we will work on is the following:
	\begin{example} \label{exmp::Intro}
	   Let $G = (V,E)$ be an undirected weighted graph, with the parameter $\theta$ representing some uncertainty in the edge weights. Suppose the graph has a source node $s$ and a sink node $t$.
	   
	   For each $\theta$, the min $s-t$ cut problem can be transformed to a convex nonsmooth problem and solved efficiently using the techniques of submodular minimisation \cite{fujishige2005submodular}.
	   
	   We would like to obtain a function $x^*(\theta)$ that can give the min cut for $\pi$ almost all $\theta$, and to obtain statistics on $x^*(\theta)$.
	\end{example}
	For other examples of this problem, we direct the reader to \cite{sullivan2015introduction}.
	
    Similarly to previous methods \cite{mcmeel2021uncertainty,crepey2020uncertainty}, we will assume $x^*(\theta)$ is continuous and decompose it over a basis of $L^2_\pi$ and then find the basis coefficients. This framework will be used for finding our general results. In a subsequent section where we study the generalised version of the problem posed in Example \ref{exmp::Intro}, we will see that this assumption on $x^*(\theta)$ fails in general, and discuss numerous methods for solving the problem in this case.
    
    A naive method for solving this problem may be the following: repeatedly sample $\theta \sim \pi$, and for each sample $\theta_i$ consider the induced function $f_{\theta_i}: \mathbb{R}^d \rightarrow \mathbb{R}$ we get by fixing $\theta = \theta_i$. We can then find the optimum from gradient descent, and repeat this to obtain desired statistics of $x^*(\theta)$ from a Monte Carlo method. 
	
	To see the effects of this method in terms of complexity, we use the justification of \cite{mcmeel2021uncertainty}, where the authors consider an example where they seek to compute the mean of $x^*(\theta)$ with respect to $\theta$. Namely, to compute the integral:
	\begin{equation}
	    \int_{\Theta} x^*(\theta) \pi(d\theta)
	\end{equation}
	
	Standard Monte Carlo results show that the estimation of the error decreases with rate $\sqrt{N}$, where $N$ is the number of times subgradient descent is ran. This means to reduce our error by a factor of $1/2$, we will need to run gradient descent $4$ times as much, which quickly becomes infeasible, and motivates us to consider our method above, which will solve the problem for all $\theta$ with only one run.
	
	\subsection{Our Contributions}
	We propose and analyse an uncertainty quantification algorithm for the subgradient method in \cite{yang2018rsg}, where $f$ is convex but not strongly convex and non-smooth. Such uncertainty quantification algorithms have been considered previously in a Stochastic Approximation setting \cite{crepey2020uncertainty}, where converge is shown asymptotically, and for gradient descent in the strongly convex case \cite{mcmeel2021uncertainty}, where an optimal rate was shown. Our methods extend the ideas of \cite{mcmeel2021uncertainty} to the case where $f$ is non strongly convex and nonsmooth, and shows the following:
	\begin{itemize}
	    \item Non-asymptotic convergence rates once the algorithm runs for ``long enough'' that are the same as subgradient descent (Theorem \ref{theorem::mainresult}), as studied in \cite{yang2018rsg}.
	    \item We also give a convergence rate to the point where we have run the algorithm ``long enough'' (Lemma \ref{lemma::LinConvRemainder}).
	    \item We study the case where a particular local error bound \cite{zhou2017unified} holds, and show that for problems of this type, that a linear rate holds, similarly to \cite{yang2018rsg}. This problem class includes submodular optimisation.
	    \item Previous methods have found it difficult to implement projections. Our method, for example, can easily implement a box constraint on the function value $f(x(\theta),\theta)$.
	\end{itemize}
	We show that for our method, the rate of convergence to the function value to an $\varepsilon$-neighbourhood of the true solution achieves the usual rate after sufficiently many iterations, and we discuss how many iterations are needed for this to happen. We conclude by performing some experiments that demonstrate our method, and then with a brief discussion.
	
\subsection{Previous Work}
	Much of the previous work surrounding chaos expansion for Uncertainty Quantification with gradient methods have focused on Robbins-Monro \cite{robbins1951stochastic} stochastic approximation algorithms. 
	
	Turning our attention to the previous body of uncertainty quantification for SA algorithms, in \cite{kulkarni2009finite} a similar spectral approach is taken, but truncated to a finite dimension at all iterations. They truncate by letting $x(\theta) = u_iB_i$, for some finite family of functions $B_i$, and then perform a standard Stochastic Approximation procedure to calculate the coefficients $u_i$. After this, error analysis from the finite-dimensional approximation is performed.
	
	However, we are interested in convergence, and thus will need to let the number of basis functions tend to infinity. In terms of infinite-dimensional methods, \cite{yin1990h} give an SA algorithm in a Hilbert space. However, the algorithm is defined in infinite-dimensional space, so is not practically implementable.
	
	In our work we will increase the number of basis functions over time, known as a sieve method. In \cite{crepey2020uncertainty}, they study a SA sieve algorithm. They show asymptotic convergence assuming only standard SA local separation and a local strong convex type assumption. However, there are also conditions on the step size which relate to the number of basis functions and the decay of the truncation error, and its our hope that in our setting we will not need conditions like this.
	
	In \cite{mcmeel2021uncertainty}, this sieve method was used for the first time in a non-stochastic approximation setting. A similar method to \cite{crepey2020uncertainty} was introduced for gradient descent for strongly convex functions, which allowed the use of standard gradient descent step sizes. It was shown that after sufficiently many step sizes were taken, there was convergence to the optimal point at an optimal accelerated rate.
	
\section{Preliminaries} \label{section::AlgoIntro}
	In this section, we introduce our core algorithm for uncertainty quantification for subgradient descent, and some useful mathematical preliminaries. We now give a formal statement of our problem:
		\begin{definition}[UQ Problem]
		Let $f$ be a function defined on $L^2_\pi \times \mathbb{R}^d \rightarrow \mathbb{R}$, that takes in a $d$-dimensional real parameter $\theta$, and function $x \in L^2_{\pi}$ evaluated at $\theta$. Suppose we cannot access the function $f$, but instead a function $F$ such that $f(x(\theta),\theta) = \mathbb{E}_v\left(F(x(\theta),\theta,v)\right)$. Assume the function is Lipschitz continuous, but not necessarily smooth. Let the variable $\theta$ come from some distribution $\pi$. 
		
		Then the Uncertainty Quantification problem asks us to do the following two things:
		\begin{enumerate}
		    \item Find a function $x^*(\theta)$ such that $\int_{V} F(x(\theta), \theta,v) \mu(\theta, dv)$ is minimised for $\pi$-almost all $\theta$.
		    \item To learn the distribution of $x^*(\theta)$ when $\theta \sim \pi$, or at least compute its statistics.
		\end{enumerate}
	\end{definition}
	As in \cite{mcmeel2021uncertainty}, we suppose that $x^*(\theta) \in L^2_\pi$, and express it as an infinite series in a chosen basis of that space:
	\begin{equation*}
	    x^*(\theta) = \sum_{i} u_i B_i(\theta),
	\end{equation*}
    and we see that our problem is reduced to finding the optimal $u_i$. We will find these coefficients using subgradient descent. To make our computations tractable, we will at each iteration $k$ fix a level $m_k$ above which we set all $u_i$ to zero, and only update nonzero $u_i$. By letting $m_k \rightarrow \infty$, we will show convergence. We note that to avoid ambiguity, we insist that $m_k$ is monotonic increasing in $k$.
    
    As previously stated we will be solving for the vector of coefficients $u$ as a proxy for finding $x^*$ itself. Therefore, we must define a new operator that will be used as the subgradient. We first recall a subgradient, for a function $f: \mathbb{R}^n \rightarrow \mathbb{R}$ at $x$ is defined as satisfying the following for all $y$:
    \begin{equation}
        f(y) \geq f(x) + \langle g_x, y - x \rangle_2.
    \end{equation}
    Note that by rearranging this Equation, we find:
    \begin{equation}
        f(y) - f(x) \geq \langle g_x, y-x \rangle_2.
    \end{equation}
    We want to adapt this definition to our function type in the manner of \cite{mcmeel2021uncertainty}:
    \begin{definition} \label{defn::subgradient}
        We say that a subgradient for $f: L^2_\pi \rightarrow \mathbb{R}$, $g(\theta)$ is also an element of $L_\pi^2$, and that it is defined by satisfying the following for $x(\theta)$, for all $y(\theta)$ and all $\theta$:
        \begin{equation}
            \left(f(y(\theta),\theta) - f(x(\theta),\theta)\right) \geq \langle g_x(\theta), y(\theta) - x(\theta) \rangle_2, \label{eqn::SubgradDef}
        \end{equation}
        where the two norm indicates we are considering the induced real numbers $g_x(\theta)$, $x(\theta)$, $y(\theta)$ for a fixed $\theta$ rather than elements of $L^2_\pi$. The subdifferential at $x(\theta)$ is the set of subgradients at $x(\theta)$.
    \end{definition}
    For us this subgradient will not be immediately useful in its current form, as we only want something that operates on the first $m$ basis functions of $x(\theta)$ in its $L^2$ basis. To this end, we define a \textit{truncated subgradient} as follows:
    \begin{definition}
        Let $g(\theta)$ be a subgradient as in Definition \ref{defn::subgradient}. Write its $L^2_\pi$ basis representation as so:
        \begin{equation*}
            g(\theta) = \sum_i u_i B_i(\theta)
        \end{equation*}
        then the level $m$ truncated subgradient is defined as:
        \begin{equation*}
            g_m(\theta) = \sum_{i=1}^m u_i B_i(\theta)
        \end{equation*}
        and the level $m$ subdifferential at a point, $\partial_m f(x(\theta),\theta)$, consists of all of the level $m$ truncated subgradients at $f(x(\theta),\theta)$.
    \end{definition}
    We note that the integral $\int_{\Theta} g(\theta)B_i(\theta) \pi(\theta, dv)$ gives the coefficients $u_i$. We also note that a level $m$ truncated subgradient satisfies the subgradient definition for all $x(\theta), y(\theta) \in L^2_{m,\pi}$.
    
    When the algorithm is introduced in the next section, we will discuss in more detail the computation of the subgradient. For now, we note that we will require it to be an unbiased estimate along with a standard bound on the variance.

	In this work, we are considering convex functions. As our function has the $L^2_\pi$ space as input, we give a specific definition of what convexity means in this context:
	\begin{definition}
		Consider a function $f: L^2_\pi \times \mathbb{R}^d \rightarrow \mathbb{R}$. We say that $f$ is convex if for all subgradients $g_{x_1}(\theta), g_{x_2}(\theta)$ of $x_1, x_2$, we have:
		\begin{equation*}
		    \langle \nabla g(x_1(\theta),\theta)- \nabla g(x_2(\theta),\theta), x_1(\theta) - x_2(\theta) \rangle_\pi \geq 0
		\end{equation*}
		for all $x_1(\cdot), x_2(\cdot) \in L^2_\pi$.
	\end{definition}
    Additionally, while our objective function will not be smooth, we will assume that it is Lipschitz continuous, but here we want to specify that we are using the $\pi$ norm also, and only considering the first argument. Formally what we mean is: 
	\begin{definition}
		Let $f: L^2_\pi \times \mathbb{R}^d \rightarrow \mathbb{R}^q$. We say that $f$ is Lipschitz Continuous with parameter $L$ if:
		\begin{equation*}
			||f(x_1(\theta), \theta) - f(x_2(\theta), \theta)||_\pi \leq L||x_1(\theta) - x_2(\theta)||_\pi
		\end{equation*}
		for all $x_1(\cdot), x_2(\cdot) \in L^2_\pi$.
	\end{definition}
	
    \subsection{Submodular Minimisation}
    In this subsection, we give an introduction to submodular minimisation. We are interested in applying this algorithm to graph cut problems, which we will see is an instance of submodular minimisation.
    
    We begin by giving the definition of a submodular function:
    \begin{definition}
        Let $V$ be a set, and $f:V \rightarrow \mathbb{R}$ be a set function. We call $f$ submodular if the following holds for all $X, Y \in V$:
        \begin{equation*}
            f(X) + f(Y) \geq f(X \cup Y) + f(X \cap Y)
        \end{equation*}
    \end{definition}
    It can be shown that the min $s,t$ cut is an example of a submodular function \cite{fujishige2005submodular}. Submodular functions can be minimised in polynomial time, and a number of combinatorial algorithms exist for this purpose \cite{iwata2009simple,iwata2001combinatorial}.
    
    Minimising submodular functions can also be done by using a convex relaxation of $f$ called the \textit{Lovasz Extension}, which we define now:
    \begin{definition}
        Let $f$ be a submodular function. Then we define the Lovasz Extension $f^L: [0,1]^{|V|} \mathbb{R}$ as follows:
        \begin{equation*}
            f^L(x) = \sum_{i=0}^{|V|} \lambda_i f(S_i)
        \end{equation*}
        where $\emptyset = S_0 \subset S_1 \subset \ldots \subset S_n = V$ is a chain such that $\lambda_i \geq 0$, $\sum_i \lambda_i = 1$, and $\sum_{i=0}^{|V|} \lambda_i \textbf{1}_{S_i} = x$.
    \end{definition}
    It can be shown that if $f$ is submodular, then the Lovasz Extension is convex and piecewise linear. Furthermore, if $x^*$ is a minimum point of the Lovasz Extension, it can be converted to a minimum of the original function via a $\phi$-rounding procedure:
    \begin{lemma}[$\phi$-rounding procedure]
        Suppose that $x^*$ is a $\varepsilon$-optimum of the Lovasz Extension. Then define the following family of subsets of $V$:
        \begin{equation}
            X_\phi = \{\cup e_i \quad | \quad x^*_i \geq \phi\}.
        \end{equation}
        By taking the maximum of $f(X_\phi)$ over $\phi \in [0,1]$, it can be shown we will find a subset $X_{\phi^*}$ that is also an $\varepsilon$-optimum of the original function $f$ \cite{fujishige2005submodular}.
    \end{lemma}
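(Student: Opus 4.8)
The plan is to reduce the statement to the classical threshold (superlevel-set) representation of the Lovász extension, together with a short averaging argument. First I would rewrite the chain-based definition of $f^L$ given above in its equivalent integral form. If $x^* \in [0,1]^{|V|}$ has coordinates sorted as $1 \geq x^*_{(1)} \geq x^*_{(2)} \geq \cdots \geq x^*_{(n)} \geq 0$, then the chain $S_0 \subset S_1 \subset \cdots \subset S_n$ appearing in the definition is exactly the family of superlevel sets $S_j = \{\, i : x^*_i \geq \phi \,\}$ obtained as $\phi$ decreases through the distinct coordinate values, and the coefficients $\lambda_j$ are the gaps between consecutive sorted values. Carrying out this identification shows that
\begin{equation*}
    f^L(x^*) = \int_0^1 f\big(\{\, i : x^*_i \geq \phi \,\}\big)\, d\phi = \int_0^1 f(X_\phi)\, d\phi,
\end{equation*}
so that $f^L(x^*)$ is precisely the average of $f(X_\phi)$ over a uniform threshold $\phi \in [0,1]$.

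The heart of the argument is then an averaging (probabilistic) step: since $f^L(x^*)$ equals the mean of $f(X_\phi)$ over $\phi$, the best attainable threshold value cannot exceed this mean, so there exists $\phi^* \in [0,1]$ with $f(X_{\phi^*}) \leq f^L(x^*)$. Because $x^*$ has only finitely many distinct coordinates, the map $\phi \mapsto X_\phi$ takes at most $|V|+1$ values, so this optimal threshold is found by scanning the finitely many candidate sets and selecting the one of smallest objective value, which is the selection rule described in the statement.

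To close, I would invoke the two standard structural facts about the Lovász extension of a submodular $f$: that $f^L$ is convex, and that its minimum over $[0,1]^{|V|}$ coincides with the discrete minimum, $\min_{x \in [0,1]^{|V|}} f^L(x) = \min_{S \subseteq V} f(S)$, with the continuous minimum attained at an indicator vector. Writing $f_{\min} = \min_{S} f(S)$, the hypothesis that $x^*$ is an $\varepsilon$-optimum of $f^L$ reads $f^L(x^*) \leq f_{\min} + \varepsilon$. Combining this with the averaging bound gives $f(X_{\phi^*}) \leq f^L(x^*) \leq f_{\min} + \varepsilon$, so $X_{\phi^*}$ is an $\varepsilon$-optimum of $f$, as claimed.

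I expect the main obstacle to be the first step: verifying that the chain definition of $f^L$ given here genuinely coincides with the threshold-integral form for an arbitrary (not necessarily generic) $x^*$, handling ties among the coordinates $x^*_i$ carefully so that the superlevel sets form the required strict chain and the $\lambda_j$ come out nonnegative and summing to one. The averaging step and the identity $\min f^L = \min f$ are then essentially immediate, though the latter relies on submodularity and is where convexity of the relaxation is genuinely used. I would also flag that the phrase ``taking the maximum of $f(X_\phi)$'' in the statement should be read as selecting the \emph{best} (i.e. minimizing) threshold, consistent with the goal of minimizing $f$.
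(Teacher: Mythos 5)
Your proof is correct and is the standard argument; the paper itself gives no proof of this lemma, deferring entirely to the cited reference, where essentially this threshold-integral-plus-averaging argument (together with the identity $\min_{[0,1]^{|V|}} f^L = \min_{S} f(S)$) appears. Your reading of ``taking the maximum of $f(X_\phi)$'' as selecting the \emph{minimizing} threshold is the right one: the averaging bound $\min_{\phi} f(X_{\phi}) \le f^L(x^*) \le f_{\min} + \varepsilon$ is exactly what makes the rounding work, and the care you flag about ties among the coordinates when matching the chain definition to the superlevel-set integral is the only genuinely delicate point.
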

    Additionally, we note that the minimiser of $f$ is also a minimiser of $f^L$ \cite{fujishige2005submodular}.
    
	\section{Algorithm Formulation}
	Now that we have covered the mathematical preliminaries, we detail our algorithm for subgradient descent. Before we begin, we would like to briefly motivate our UQ methods by comparing the complexity in terms of number of Monte Carlo samples. We will consider the motivating example of using Jacobi polynomials in the case where the dimension of $x$ is $1$ and of $\theta$ as $d$ as in \cite{crepey2020uncertainty}.
	
	To begin, lets consider a more naive approach where we truncate to a fixed level $m$, and then perform gradient descent a large number of times in order to construct a Monte Carlo approximation to the vector $u$. As outlined in \cite{crepey2020uncertainty}, in order to balance the error of the truncation and the actual approximation, we must have the number of Monte Carlo samples to approximate the $u^*_i$ increase as $\varepsilon^{-1+d/(2(\eta-1))}$, where $\eta$ is the order of differentiability of $x^*(\theta)$. Note that for us, each Monte Carlo sample must be obtained by running gradient descent, so this is the amount of times gradient descent has to be run to convergence, and we suffer from the curse of dimensionality for large $d$.
	
	We note that in practice, we will not be able to compute the subgradient $g_x(\theta)$ exactly, and so we must estimate it by $g_x'(\theta)$. While we will suggest a procedure later, we emphasise that our convergence analysis is independent of the proceudre we use, so long as it satisfies some basic bias and variance properties.
	
	In our case, we will show that we converge linearly to the correct vector $u^*$, and there are no nested computations that require many runs of gradient descent like the method above.
	
	We now move on to discussing our algorithm, which is presented in Algorithm \ref{alg::algo_full}. As of yet, we have not discussed how to estimate the vector $g_x'(\theta)$. The most natural is to perform a Monte Carlo procedure to approximate the integrals $\int_{\theta} g_x'(\theta) B_i(\theta) \pi(d\theta)$. This is the procedure we use in our experiments, though we emphasise again that other procedures could be used. We note that an error analysis of this procedure can be found in \cite{mcmeel2021uncertainty}, where the proof follows by instead of having gradients, choosing one representative subgradient at each point $g_x$.
	
	We then form a subgradient to update the coefficients $u_i$. We will later show that the distance between the optimal function value we update for the previous representation and the true $x^*(\theta)$ converges to zero. After every loop, we will change $m_k$ in such a manner that $m_k \rightarrow \infty$ in a monotonically increasing manner. As approximating $x(\theta)$ is equivalent to approximating the vector $u$, we will from here on phrase our algorithm as attempting to find the optimal vector $u^*$.
	
	This procedure can be thought of as performing iterations on a family of optimisation problems that converge to the ``correct'' problem, in an effort to save on complexity while still eventually converging to the right point. In later sections, we will see an experimental verification of this, but additionally we will see that this procedure also functions as variance reduction on the output.

\begin{algorithm} \label{alg::subroutine}
    \caption{Subgradient subroutine}
    \label{algo_sub}
        \begin{algorithmic}
        \STATE{\textbf{Input}: Function $f(x, \theta)$, a number of iterations per stage $T$, a step size $\eta$, a number of basis functions to use $m_k$, an initial point $u^0$.}
        \STATE{\textbf{Output}: A function $x^T(\theta) \in L^2_\pi$.}
        \STATE{Define $x^{0} = \mathcal{I}(u^{0})$}
        \FOR{$t = 1, \ldots, T$}
        \STATE{Query subgradient oracle to obtain approximate subgradient $g'_{m_k}(x^{t-1}(\theta))$.}
        \STATE{Update $x^t(\theta) = \Pi\left(x^{t-1}(\theta) - \eta g'_{m_k}(x^{t-1}(\theta))\right)$}
        \ENDFOR
        \STATE{Let $\tilde{x}^T(\theta) = \sum_{i=1}^T \frac{x^i(\theta)}{T}$, and $u_T = \mathcal{I}(\tilde{x}^T(\theta)$.}
        \RETURN $u_T$
        \end{algorithmic}
\end{algorithm}

\begin{algorithm} 
    \caption{RSG subroutine}
    \label{alg::subroutineRSG}
        \begin{algorithmic}
        \STATE{\textbf{Input}: Function $f(x, \theta)$, a number of stages in this loop $K$, the number of iterations per stage in this loop $t$, $\alpha > 1$, a sequence of numbers of basis functions to use $m_k$, an initial point $u^{I}$ with only the first $m_0$ entries possibly nonzero.}
        \STATE{\textbf{Output}: A function $x^{F}(\theta) \in L^2_\pi$.}
        \STATE{Let $u_0 = u^I$}
        \STATE{Set $\eta_1 = \frac{\varepsilon_0}{\alpha \left(G^2+V^2\right)}$.}
        \FOR{$k = 1, \ldots, K$}
        \STATE{Call subroutine SG to obtain $u_k = SG(u_{k-1}, \eta_k, t)$.}
        \STATE{Set $\eta_{k+1} = \eta_{k}/\alpha$.}
        \ENDFOR
        \RETURN $x^{F}(\theta) = \mathcal{I}(u_K)$
        \end{algorithmic}
\end{algorithm}

\begin{algorithm} 
    \caption{Restarted Subgradient method}
    \label{alg::algo_full}
        \begin{algorithmic}
        \STATE{\textbf{Input}: Function $f(x, \theta)$, a number of stages per loop $K$, the number of iterations per stage in loop $j$ $t_j$, $\alpha > 1$, a sequence of numbers of basis functions to use $m_k$, an initial point $u^{I}$ with only the first $m_0$ entries possibly nonzero.}
        \STATE{\textbf{Output}: A function $x^{F}(\theta) \in L^2_\pi$.}
        \STATE{Let $u_0 = u^I$}
        \STATE{Let $\varepsilon_0$ be a valid lower bound on optimality for $||f(x^0(\theta),\theta)||_\pi$}
        \FOR{$i = 1,2,\ldots$}
        \STATE{Let $u^i = RSG(u^{i-1}, f, K, t_i, \alpha, m, \varepsilon_0)$}
        \STATE{If termination condition reached, break}
        \ENDFOR
        \RETURN $x^{F}(\theta) = \mathcal{I}(u_K)$
        \end{algorithmic}
\end{algorithm}
The algorithm is presented in Algorithm \ref{alg::algo_full}. We note that there is one more round of resetting than in the presentation in \cite{yang2018rsg}. We will later see that this extra layer of resetting is needed to ensure convergence as we vary the number of basis functions. In the analysis that follows, we refer to the loop indexed by $i$ as the \textit{outer loop}, and the loop indexed by $K$ as the \textit{inner loop}.

\section{Generic Convergence for Subgradient UQ} \label{sec::GenSubUQ}
To show convergence for our subgradient UQ (given in Algorithm \ref{alg::algo_full}), we follow the approach in \cite{mcmeel2021uncertainty}: we first show that if we remain at a fixed level of basis functions $m_k$, that the method converges to some function value. We will refer to this value as a \textit{level $m_k$ optimum}.

We then show that as $m_k \rightarrow \infty$, the level $m_k$ optimal value converges to the true optimal value. Finally, we show that when the number of basis functions is high enough, we converge to the true optimum with the same rate as in the fixed level case.
\subsection{Fixed Level Convergence}
We follow the proof strategy in \cite{yang2018rsg} here. They operate on some core assumptions for the full restarted subgradient (RSG) algorithm, which we adapt to our UQ setting. We also add some assumptions about the estimation of the subgradient:
\begin{assumption}
    For the problem of minimising convex $f$, we assume:
    \begin{itemize}
        \item For any $x(\theta) \in \Omega$, we know a constant $\varepsilon_0 \geq 0$ such that $||f(x(\theta),\theta) - f(x^*(\theta),\theta)||_2 \leq \varepsilon_0$.
    \end{itemize}
    Additionally, let $g'_x(\theta)$ be the estimation of a subgradient $g_x(\theta)$ at $x(\theta)$. Then we assume the following:
    \begin{itemize}
        \item $\mathbb{E}\left(g'_x(\theta)\right)$ is a subgradient at $x(\theta)$.
        \item $\mathbb{E}\left(g'_x(\theta)^2\right) \leq \mathbb{E}\left(g_x(\theta)^2\right) + V^2 \leq G^2 + V^2$.
    \end{itemize}
\end{assumption}
The first assumption is a natural extension from the usual subgradient method case, to the case where the input arguments are $L^2_\pi$ functions. The second set are similar to those used in \cite{mcmeel2021uncertainty} for the bias and variance properties of the estimation of the gradient. For the next assumption, we must define some technical machinery first.

In all that follows, fix a number of basis functions $m$. Let the set of level $m$ optimal elements $x^*_m(\theta) \in L^2_{m,\pi}$ be denoted $\Omega^*_m$. Next, let the element of $\Omega^*_m$ that is closest to some arbitrary $x(\theta) \in L^2_\pi$ (measured in the $\pi$-norm) be denoted $y^*_m(x(\theta))$. Note that it is uniquely defined as the set $\Omega^*_m$ is convex, as is the $\pi$ norm.

Now we define $L_{\varepsilon,m}$ and $S_{\varepsilon,m}$ as the $\varepsilon$-level set and sublevel sets at $m$ basis functions respectively, where distance is again measured in the $\pi$ norm. Namely, if the optimal function value at $m$ basis functions is $f^*_m(\theta)$, then $S_{\varepsilon,m}$ consists of those elements $x(\theta) \in L^2_{m,\pi}$ such that $||f(x(\theta), \theta) - f^*_m(\theta)||_\pi \leq \varepsilon$. From here, we can define $B_{\varepsilon,m}$ as the maximum distance between $L_{\varepsilon,m}$ and $\Omega^*_m$, that is:
\begin{equation}
    B_{\varepsilon,m} = \max_{x(\theta) in L_{\varepsilon,m}} \min_{y_m(x(\theta)) \in \Omega^*_m} ||x(\theta) - y(\theta)||_\pi = \max_{x \in L_{\varepsilon,m}} ||x(\theta) - y^*_m(x(\theta))||_\pi.
\end{equation}
This leads us to our second assumption:
\begin{assumption}
    $B_{\varepsilon,m}$ is finite $\forall$ $k$.
\end{assumption}
Introducing the quantity $B_{\varepsilon,m}$ is key to the specialised analysis of polyhedral convex optimisation (and hence, submodular minimisation) we will do later. Additionally, we extend the definitions to infinite $m$ by suppressing the $m$ subscript. We similarly assume that $B_{\varepsilon}$ is finite.

With this, we next define the following function:
\begin{equation*}
    w^{\dagger}_{\varepsilon,m}(\theta) \defeq \arg \min_{u \in \mathcal{S}_{\varepsilon,m}} ||u(\theta) - w(\theta)||_\pi^2
\end{equation*}
which is the closest function to $w$ in the $\varepsilon$-sublevel set for $m$ basis functions.

Note firstly that the sublevel set is convex, and $w(\theta)$ is a continuous function of $\theta$. Because of these two properties, we have that $w^\dagger_{\varepsilon,m}$ is a continuous function of $\theta$. We next adapt the definition of the normal cone and the first order residual from \cite{yang2018rsg} to our setting as follows:
\begin{definition}
    Let $x(\theta) \in L^2_{m,\pi}$. We define the level $m$ normal cone of $L^2_\pi$ at $x(\theta)$ to be:
    \begin{equation*}
        \mathcal{N}_m(x(\theta)) \defeq \{ y(\theta) \in L^2_{m,\pi} : \langle y(\theta), z(\theta) \rangle_\pi \leq \langle y(\theta), x(\theta) \rangle_\pi, \forall z(\theta) \in L^2_{m,\pi} \}
    \end{equation*}
    With this definition, we define the level $m$ first order residual to be:
    \begin{equation*}
        \text{dist}\left(0, f(x(\theta),\theta) + \mathcal{N}_m(x(\theta))\right) \defeq \min_{g \in \partial_m f(x(\theta),\theta), v \in \mathcal{N}_m(x(\theta))} ||g(\theta) + v(\theta)||_\pi
    \end{equation*}
    As before, we can extend these definitions to the infinite-dimensional case by suppressing the subscripts $m$.
\end{definition}
As a motivation for the term \textit{first order residual}, we note that in the infinite-dimensional case, an element of $L^2_\pi$ is optimal if and only if the first order residual at that point is $0$. The final constant we need to define here is the following:
\begin{definition}
    For any $\varepsilon > 0$ such that $\mathcal{L}_\varepsilon \neq \emptyset$, we have:
    \begin{equation*}
        \rho_{\varepsilon,m} = \min_{x(\theta) \in \mathcal{L}_{\varepsilon,m}} \text{dist}\left(0, f(x(\theta),\theta), \mathcal{N}_m(x(\theta))\right)
    \end{equation*}
    where again as before, we allow the infinite-dimensional case by suppressing the subscript $m$.
\end{definition}
We can now give a result on the performance of the subroutine, Algorithm \ref{algo_sub}. A proof can be found in the Appendix:
\begin{lemma} \label{lem::constStepSG}
    Suppose we run the subroutine Algorithm \ref{algo_sub}, but we only have noisy subgradients as defined above. Using a constant step size $\eta$ for $T$ iterations, we have:
    \begin{equation*}
        \mathbb{E}\left(\mathbb{E}_\pi\left(f(\tilde{x}_T(\theta),\theta) - f(x^*(\theta),\theta) | \mathcal{F}_k\right)\right) \leq \frac{\left(G^2 + V^2\right) \eta}{2} + \frac{||x_1(\theta) - x(\theta)||_\pi^2}{2\eta T}
    \end{equation*}
    where $\tilde{x}_T = \frac{1}{T}\sum_{i=1}^T x_i$, the outer expectation is with respect to the filtration $\mathcal{F}_k$, and $x^*(\theta)$ is some truncated optimum.
\end{lemma}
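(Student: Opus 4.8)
The plan is to run the classical ``one-step contraction, then telescope and average'' argument for the subgradient method, carried out inside the subspace $L^2_{m,\pi}$ and with the extra expectation over the subgradient-oracle noise handled by conditioning. First I would fix the level $m$ and observe that every iterate $x_t(\theta)$, the truncated comparator $x^*(\theta)$, and the noisy truncated subgradient $g'_m(x_t(\theta))$ all lie in the convex set $L^2_{m,\pi}$, so the projection $\Pi$ onto that set is nonexpansive in the $\pi$-norm. Starting from the update rule $x_{t+1}(\theta) = \Pi(x_t(\theta) - \eta g'_m(x_t(\theta)))$ and using nonexpansiveness of $\Pi$, I would write
\begin{equation*}
    ||x_{t+1}(\theta) - x^*(\theta)||_\pi^2 \leq ||x_t(\theta) - \eta g'_m(x_t(\theta)) - x^*(\theta)||_\pi^2
\end{equation*}
and expand the right-hand side into the three usual terms: $||x_t - x^*||_\pi^2$, a cross term $-2\eta \langle g'_m(x_t(\theta)), x_t(\theta) - x^*(\theta)\rangle_\pi$, and a quadratic term $\eta^2 ||g'_m(x_t(\theta))||_\pi^2$.

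The key step is to pass from this inequality to a bound on the expected function-value gap. Taking the conditional expectation over the oracle noise and invoking the unbiasedness assumption (that $\mathbb{E}(g'_x(\theta))$ is a genuine subgradient at $x(\theta)$), I would replace the noisy subgradient by its expectation in the cross term and then apply the truncated-subgradient inequality of Definition~\ref{defn::subgradient} \emph{pointwise in $\theta$}, using the fact recorded in the excerpt that a level $m$ truncated subgradient satisfies the subgradient definition for all arguments in $L^2_{m,\pi}$. Integrating that fiberwise inequality against $\pi$ converts the $\pi$-inner product into the function-value gap, giving $\langle \mathbb{E}(g'_m(x_t(\theta))), x_t(\theta) - x^*(\theta)\rangle_\pi \geq \mathbb{E}_\pi(f(x_t(\theta),\theta) - f(x^*(\theta),\theta))$, while the second-moment assumption $\mathbb{E}(g'_x(\theta)^2) \leq G^2 + V^2$ controls the quadratic term. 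Rearranging yields the per-step descent inequality
\begin{equation*}
    2\eta\, \mathbb{E}\big(\mathbb{E}_\pi(f(x_t) - f(x^*))\big) \leq \mathbb{E}\big(||x_t - x^*||_\pi^2\big) - \mathbb{E}\big(||x_{t+1} - x^*||_\pi^2\big) + \eta^2(G^2 + V^2).
\end{equation*}

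Finally I would sum this inequality over $t = 1,\dots,T$ so that the squared-distance terms telescope, discard the nonnegative terminal term $\mathbb{E}(||x_{T+1} - x^*||_\pi^2)$, and divide through by $2\eta T$, which bounds the averaged gap $\frac{1}{T}\sum_{t=1}^T \mathbb{E}\,\mathbb{E}_\pi(f(x_t) - f(x^*))$ by $\frac{||x_1 - x^*||_\pi^2}{2\eta T} + \frac{(G^2+V^2)\eta}{2}$. Convexity of $f$ in its first argument, together with Jensen's inequality applied to $\tilde{x}_T = \frac{1}{T}\sum_{t=1}^T x_t$, gives $f(\tilde{x}_T(\theta),\theta) \leq \frac{1}{T}\sum_{t=1}^T f(x_t(\theta),\theta)$ fiberwise and hence in $\pi$-expectation, which upgrades the bound on the averaged gap to the claimed bound on $\tilde{x}_T$. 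I expect the only genuine subtlety to be the bookkeeping of the two expectations: the tower property must be used to collapse the iterated conditional expectations over the oracle noise into the single outer expectation appearing in the statement, and the subgradient inequality must be applied before integrating in $\theta$ rather than after, since Definition~\ref{defn::subgradient} is stated fiberwise in $\theta$. Everything else is routine algebra.
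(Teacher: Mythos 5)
Your proposal is correct and follows essentially the same route as the paper's own proof: expand the squared distance of the (pre-projection) update, use unbiasedness plus the fiberwise subgradient inequality to convert the cross term into the $\pi$-averaged function-value gap, bound the quadratic term by $G^2+V^2$, invoke nonexpansiveness of the projection, telescope, and finish with Jensen's inequality. The only cosmetic difference is that you apply nonexpansiveness before expanding while the paper expands the pre-projection iterate first; the two orderings are equivalent.
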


From here, we can give a proof for the restarted subgradient method. We note that in this scenario, we only need to use one outer loop.
\begin{theorem} \label{theorem::subgradientfixed}
    Suppose the previous assumptions all hold, and we fix $m$ basis functions. Let $t \geq \frac{\alpha^2 \left(G^2 + V^2\right)}{\rho_{\varepsilon,m}^2}$ and $K = \lceil \log_{\alpha} \left( \frac{\varepsilon_0}{\varepsilon}\right) \rceil$ in our algorithm, with at most $K$ stages and only one outer loop. Then we return a $2\varepsilon$ optimal solution in expectation in at most $tK$ iterations.
\end{theorem}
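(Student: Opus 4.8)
The plan is to run a single outer loop and establish, by induction on the inner-loop stage index $k$, a geometric decrease of the expected optimality gap of the form $\mathbb{E}\big[f(\tilde{x}_k(\theta),\theta)-f^*_m(\theta)\big]\le \varepsilon_k$, where $\varepsilon_k\defeq \varepsilon_0/\alpha^k$. The base case $k=0$ is exactly the standing assumption that $\varepsilon_0$ bounds $\|f(x^0(\theta),\theta)-f(x^*(\theta),\theta)\|$. For the inductive step I would apply Lemma \ref{lem::constStepSG} to stage $k$, taking the starting point to be the previous stage's output $x_{k-1}(\theta)$ and the comparison point to be $x=y^*_m(x_{k-1}(\theta))$, the nearest level-$m$ optimum, so that $f(x)=f^*_m(\theta)$ and $\|x_{k-1}-x\|_\pi=\text{dist}(x_{k-1},\Omega^*_m)$.

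I would then substitute the restart step size $\eta_k=\eta_1/\alpha^{k-1}=\varepsilon_{k-1}/\big(\alpha(G^2+V^2)\big)$ into the bound of Lemma \ref{lem::constStepSG}. The first term becomes $\tfrac{(G^2+V^2)\eta_k}{2}=\tfrac{\varepsilon_{k-1}}{2\alpha}$. For the second term I would invoke the local error bound of the RSG framework, $B_{\varepsilon_{k-1},m}\le \varepsilon_{k-1}/\rho_{\varepsilon,m}$ (valid while $\varepsilon_{k-1}\ge\varepsilon$, which holds up to the final stage), to replace $\text{dist}(x_{k-1},\Omega^*_m)$ by $\varepsilon_{k-1}/\rho_{\varepsilon,m}$; combined with the hypothesis $t\ge \alpha^2(G^2+V^2)/\rho_{\varepsilon,m}^2$, a short computation shows this term is also at most $\tfrac{\varepsilon_{k-1}}{2\alpha}$. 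Summing the two contributions yields $\mathbb{E}[\,\cdot\,]\le \varepsilon_{k-1}/\alpha=\varepsilon_k$, which closes the induction. Running $K=\lceil\log_\alpha(\varepsilon_0/\varepsilon)\rceil$ stages then forces $\varepsilon_K=\varepsilon_0/\alpha^{K}\le\varepsilon$, and since each stage performs $t$ iterations the total count is $tK$, as claimed.

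The delicate point is the passage from the inductive hypothesis, which controls only $\mathbb{E}[f(x_{k-1})-f^*_m]$, to the bound on the squared distance $\mathbb{E}[\text{dist}(x_{k-1},\Omega^*_m)^2]$ that the noisy form of Lemma \ref{lem::constStepSG} feeds on, and this is where I expect the extra slack in the $2\varepsilon$ guarantee to be absorbed. In the deterministic RSG of \cite{yang2018rsg} the per-stage gap holds surely, so $x_{k-1}\in S_{\varepsilon_{k-1},m}$ and the local error bound applies pointwise, giving a clean $\varepsilon$-guarantee. With only unbiased noisy subgradients I can guarantee the gap in expectation alone, so $x_{k-1}$ need not lie in the sublevel set, and the squared-distance term must be handled by conditioning on $\mathcal{F}_{k-1}$ (or by a high-probability confinement of $x_{k-1}$ to $S_{\varepsilon_{k-1},m}$) before applying the error bound. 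I expect this step to be the main obstacle; once it is resolved, the geometric bookkeeping on $\varepsilon_k$ and $\eta_k$ is routine.
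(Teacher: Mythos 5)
There is a genuine gap, and it is in the step you yourself flag as routine. You take the comparison point in Lemma \ref{lem::constStepSG} to be the nearest level-$m$ optimum $y^*_m(x_{k-1})$ and then propose to bound $\mathrm{dist}(x_{k-1},\Omega^*_m)$ by $\varepsilon_{k-1}/\rho_{\varepsilon,m}$ via ``$B_{\varepsilon_{k-1},m}\le \varepsilon_{k-1}/\rho_{\varepsilon,m}$.'' That inequality is the reverse of what the paper's Lemma \ref{lem::Bep} provides: the lemma gives $\rho_{\varepsilon,m}\ge \varepsilon/B_{\varepsilon,m}$, i.e.\ $B_{\varepsilon,m}\ge \varepsilon/\rho_{\varepsilon,m}$, a \emph{lower} bound on $B$. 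In the generality of this theorem (only $B_{\varepsilon,m}<\infty$ and $\rho_{\varepsilon,m}>0$ are assumed, no global error bound) the distance to the optimal set simply cannot be controlled linearly by the function gap, and this is precisely the obstruction the restarted-subgradient framework is designed to sidestep. The paper's proof instead compares against $x^\dagger_{k-1,\varepsilon}$, the projection of $x_{k-1}$ onto the $\varepsilon$-\emph{sublevel} set, for which Lemma \ref{lem::wTechLem} does give the usable bound $\|x_{k-1}-x^\dagger_{k-1,\varepsilon}\|_\pi\le \rho_\varepsilon^{-1}\,\mathbb{E}_\pi\bigl(f(x_{k-1})-f(x^\dagger_{k-1,\varepsilon})\bigr)$. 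This substitution also changes the induction target: since $f(x^\dagger_{k-1,\varepsilon})$ sits $\varepsilon$ above $f^*_m$, the invariant that actually closes is $\mathbb{E}_\pi\bigl(f(x_k)-f^*_k\bigr)\le \varepsilon_k+\varepsilon$, not $\le\varepsilon_k$; the additive $\varepsilon$ is exactly where the $2\varepsilon$ in the statement comes from, so your cleaner invariant cannot be the right one. A two-case split ($x_{k-1}$ inside the sublevel set, where the distance term vanishes, versus outside, where $f(x^\dagger_{k-1,\varepsilon})-f^*=\varepsilon$ and the residual bound applies) completes the step.

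On the positive side, the difficulty you single out at the end --- that with noisy subgradients the per-stage gap holds only in expectation, so the squared distance $\mathbb{E}\bigl[\|x_{k-1}-x^\dagger_{k-1,\varepsilon}\|_\pi^2\bigr]$ is not immediately controlled by the first moment of the function gap --- is a real subtlety, and the paper's own proof passes over it rather quickly (it applies the pointwise bound of Lemma \ref{lem::wTechLem} and then squares inside an expectation). You are right to want either conditioning on $\mathcal{F}_{k-1}$ with a second-moment hypothesis or a high-probability confinement argument there. But that refinement is secondary to the main fix: replace the nearest optimum by the nearest point of the $\varepsilon$-sublevel set, use Lemma \ref{lem::wTechLem} rather than an inverted Lemma \ref{lem::Bep}, and carry the $+\varepsilon$ through the induction.
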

In the above Theorem, we have used the quantity $\rho_{\varepsilon,m}$, and we now give a Lemma that will relate this quantity to $B_{\varepsilon,m}$. This relation will be key to the analysis on submodular minimisation problems. A proof is found in the appendix.
\begin{lemma} \label{lem::Bep}
    For any $\varepsilon > 0$ such that $L_{\varepsilon,m} \neq 0$, we have $\rho_{\varepsilon,m} \geq \frac{\varepsilon}{B_{\varepsilon,m}}$, and for any $w \in L^2_{\pi,m}$:
    \begin{align*}
        &||w(\theta) - w_{\varepsilon,m}^{\dagger}(\theta)||_\pi \\
        &\leq \frac{||w_{\varepsilon,m}^{\dagger}(\theta) - w_{\varepsilon,m}(\theta)^*||_\pi}{\varepsilon}\mathbb{E}_\pi\left(f(w(\theta),\theta) - f(w_{\varepsilon,m}^\dagger(\theta),\theta)\right)\\
        &\leq \frac{B_{\varepsilon,m}}{\varepsilon}\mathbb{E}_\pi
        \left(f(w(\theta),\theta) - f(w_{\varepsilon,m}^\dagger(\theta),\theta)\right)
    \end{align*}
    This result also holds in the infinite-dimensional case by removing all $m$ subscripts.
\end{lemma}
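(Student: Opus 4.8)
The plan is to follow the polyhedral / local-error-bound analysis of \cite{yang2018rsg}, transporting every inner product, norm and optimality condition to the $L^2_\pi$ geometry of the truncated space $L^2_{m,\pi}$. The two claims are proved in order: first the residual bound $\rho_{\varepsilon,m} \geq \varepsilon/B_{\varepsilon,m}$, which is then reused to establish the displayed chain of inequalities. Throughout I treat the level-$m$ problem as the convex minimisation of $x \mapsto \mathbb{E}_\pi\left(f(x(\theta),\theta)\right)$ over $L^2_{m,\pi}$, for which the integrated form of Definition \ref{defn::subgradient}, namely $\mathbb{E}_\pi\left(f(y(\theta),\theta) - f(x(\theta),\theta)\right) \geq \langle g, y - x \rangle_\pi$ for $g \in \partial_m f(x(\theta),\theta)$, is the working subgradient inequality.

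For the first claim I fix $x(\theta) \in L_{\varepsilon,m}$ and let $x^*(\theta) = y^*_m(x(\theta))$ be its projection onto $\Omega^*_m$, so that $\|x - x^*\|_\pi \leq B_{\varepsilon,m}$ by the definition of $B_{\varepsilon,m}$. For any $g \in \partial_m f(x(\theta),\theta)$ and any $v \in \mathcal{N}_m(x(\theta))$ the integrated subgradient inequality gives $\langle g, x - x^* \rangle_\pi \geq \varepsilon$ (the objective gap on the level set equals $\varepsilon$), while the normal-cone property evaluated at the feasible point $x^*$ gives $\langle v, x - x^* \rangle_\pi \geq 0$. Adding and applying Cauchy--Schwarz in $L^2_\pi$ yields
\[
\|g + v\|_\pi \;\geq\; \frac{\langle g + v,\, x - x^* \rangle_\pi}{\|x - x^*\|_\pi} \;\geq\; \frac{\varepsilon}{B_{\varepsilon,m}},
\]
and minimising over $g$, $v$ and then over $x \in L_{\varepsilon,m}$ gives $\rho_{\varepsilon,m} \geq \varepsilon/B_{\varepsilon,m}$.

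For the second claim the case $w \in \mathcal{S}_{\varepsilon,m}$ is immediate, since then $w^\dagger_{\varepsilon,m} = w$ and the left-hand side vanishes. Otherwise $w^\dagger_{\varepsilon,m}$ lies on the boundary $L_{\varepsilon,m}$, and the first-order optimality condition for the $\pi$-projection of $w$ onto the convex set $\mathcal{S}_{\varepsilon,m}$ places $w - w^\dagger_{\varepsilon,m}$ in the normal cone to $\mathcal{S}_{\varepsilon,m}$ at $w^\dagger_{\varepsilon,m}$; since the defining constraint is active there, this normal cone is the nonnegative cone generated by the subdifferential, so $w - w^\dagger_{\varepsilon,m} = \lambda g$ for some $\lambda \geq 0$ and $g \in \partial_m f(w^\dagger_{\varepsilon,m}(\theta),\theta)$. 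Convexity then gives $\mathbb{E}_\pi\left(f(w(\theta),\theta) - f(w^\dagger_{\varepsilon,m}(\theta),\theta)\right) \geq \langle g,\, w - w^\dagger_{\varepsilon,m}\rangle_\pi = \lambda\|g\|_\pi^2 = \|w - w^\dagger_{\varepsilon,m}\|_\pi\,\|g\|_\pi$. Applying the Part~1 estimate with $x = w^\dagger_{\varepsilon,m}$ and $x^* = w^*_{\varepsilon,m}$ gives $\|g\|_\pi \geq \varepsilon/\|w^\dagger_{\varepsilon,m} - w^*_{\varepsilon,m}\|_\pi$; substituting and rearranging produces the middle inequality, and $\|w^\dagger_{\varepsilon,m} - w^*_{\varepsilon,m}\|_\pi \leq B_{\varepsilon,m}$ (as $w^\dagger_{\varepsilon,m} \in L_{\varepsilon,m}$) gives the last. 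The infinite-dimensional statement follows by the same argument after deleting the $m$ subscripts.

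I expect the main obstacle to be the projection optimality step: identifying the normal cone of the sublevel set $\mathcal{S}_{\varepsilon,m}$ with the nonnegative cone generated by $\partial_m f$ at an active point requires a constraint-qualification argument, which is available here because $\varepsilon > 0$ supplies a strictly feasible point (a Slater-type condition), and one must check that $g$ can be taken to be a genuine level-$m$ truncated subgradient rather than merely a subgradient of the integrated objective. A secondary but real wrinkle is reconciling the $\|\cdot\|_\pi$ used to define $L_{\varepsilon,m}$ and $\mathcal{S}_{\varepsilon,m}$ (an $L^2$ norm of the pointwise objective gap) with the expectation $\mathbb{E}_\pi$ of that gap appearing in the convexity inequalities and in the conclusion; I would resolve this by reading the level-set constraint through the expected objective gap, under which a point of $L_{\varepsilon,m}$ satisfies $\mathbb{E}_\pi\left(f(w^\dagger_{\varepsilon,m}(\theta),\theta) - f^*_m(\theta)\right) = \varepsilon$, after which the remaining manipulations are the same Cauchy--Schwarz-and-convexity bookkeeping as in the finite-dimensional case.
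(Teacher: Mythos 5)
Your proof is correct and follows essentially the same route as the paper's: the subgradient inequality augmented by a normal-cone vector, Cauchy--Schwarz in $\|\cdot\|_\pi$, and the fact that the objective gap equals $\varepsilon$ on $L_{\varepsilon,m}$, with the second chain obtained from the projection/normal-cone decomposition $w - w^\dagger_{\varepsilon,m} = \lambda g$ exactly as in the cited argument of Yang et al. The only difference is that you spell out the steps the paper defers to \cite{yang2018rsg} (including the Slater-type justification and the $\mathbb{E}_\pi$ versus $\|\cdot\|_\pi$ reading of the level set, which the paper also resolves implicitly in your favour).
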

From here, we can state the following corollary:
\begin{corollary} \label{cor::FixedLevelGen}
    Suppose our first two assumptions hold. Then the iteration complexity for obtaining a $2\varepsilon$ level $m$ optimal solution is $\mathcal{O}\left(\frac{\alpha^2 \left(G^2 + V^2\right) B_{\varepsilon,m}^2}{\varepsilon^2} \ceil{\log_{\alpha} \left( \frac{\varepsilon_0}{\varepsilon}\right)} \right)$ provided $t, K$ are as before.
\end{corollary}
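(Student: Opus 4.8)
The plan is to read off the iteration count directly from Theorem \ref{theorem::subgradientfixed} and then substitute the lower bound on $\rho_{\varepsilon,m}$ provided by Lemma \ref{lem::Bep}. Theorem \ref{theorem::subgradientfixed} already guarantees a $2\varepsilon$ level $m$ optimal solution (in expectation) after at most $tK$ iterations, whenever $t \geq \frac{\alpha^2(G^2+V^2)}{\rho_{\varepsilon,m}^2}$ and $K = \ceil{\log_\alpha(\varepsilon_0/\varepsilon)}$. Hence the total iteration complexity is simply the product $tK$, and the only remaining task is to re-express the admissible choice of $t$ in terms of the more concrete geometric quantity $B_{\varepsilon,m}$ rather than $\rho_{\varepsilon,m}$.

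First I would note that the first two assumptions ensure both that Theorem \ref{theorem::subgradientfixed} applies and that $B_{\varepsilon,m}$ is finite (Assumption 2), so the bound we produce is meaningful. Since we are seeking a $2\varepsilon$ optimal solution, the sublevel set $L_{\varepsilon,m}$ is nonempty (it contains the level $m$ optimum), so the hypothesis of Lemma \ref{lem::Bep} is met and we may invoke $\rho_{\varepsilon,m} \geq \frac{\varepsilon}{B_{\varepsilon,m}}$. Squaring and inverting both sides (all quantities being positive) gives $\frac{1}{\rho_{\varepsilon,m}^2} \leq \frac{B_{\varepsilon,m}^2}{\varepsilon^2}$, so the explicit choice $t = \frac{\alpha^2(G^2+V^2)B_{\varepsilon,m}^2}{\varepsilon^2}$ satisfies $t \geq \frac{\alpha^2(G^2+V^2)}{\rho_{\varepsilon,m}^2}$, as required by the theorem.

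Multiplying this value of $t$ by $K = \ceil{\log_\alpha(\varepsilon_0/\varepsilon)}$ then yields the claimed complexity $\mathcal{O}\!\left(\frac{\alpha^2(G^2+V^2)B_{\varepsilon,m}^2}{\varepsilon^2}\ceil{\log_\alpha(\varepsilon_0/\varepsilon)}\right)$, completing the argument. This is genuinely a direct corollary: all of the analytic work resides in Theorem \ref{theorem::subgradientfixed} and Lemma \ref{lem::Bep}, and no new estimates are needed. The one point I would be careful about is the direction of the inequality. Because $\rho_{\varepsilon,m}$ enters the requirement on $t$ through its inverse square, an \emph{upper} bound on the iteration count demands a \emph{lower} bound on $\rho_{\varepsilon,m}$, and it is precisely this direction that Lemma \ref{lem::Bep} supplies; reversing it would produce a vacuous statement. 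The only other subtlety worth flagging is the nonemptiness of $L_{\varepsilon,m}$, which I would verify up front so that Lemma \ref{lem::Bep} is legitimately applicable.
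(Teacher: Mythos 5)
Your proof is correct and follows exactly the route the paper intends: the corollary is obtained by substituting the lower bound $\rho_{\varepsilon,m} \geq \varepsilon / B_{\varepsilon,m}$ from Lemma \ref{lem::Bep} into the admissible choice of $t$ in Theorem \ref{theorem::subgradientfixed} and multiplying by $K$. Your attention to the direction of the inequality and to the nonemptiness of $L_{\varepsilon,m}$ is appropriate and matches the (implicit) argument in the paper.
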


\subsection{Convergence of the Optimum}
In the previous subsection, we are considering a series of optimisation problems where the subgradient $g'_m$ converges to the correct subgradient $g$ as $m \rightarrow \infty$. We would also like to confirm if a similar property holds for $f(x^*_m(\theta),\theta)$, or on the set of its subgradients.

In this subsection, we will show both that our convexity is sufficient for this convergence, and also give an explicit convergence rate in terms of the level $m$ remainder $R_m$.
	
We mentioned previously that in general, the truncated optimum is not equal to the projection of the true optimum to the same amount of basis functions. We now show that the distance between the two converges to zero and interestingly, we see its convergence properties are related to the Lipschitz constant of the function:
	\begin{lemma} \label{lemma::LinConvRemainder}
		Let $u^*_m$ be level $m$ optimum , and let $x^*_m(\theta) = \mathcal{I}(u^*_m)$. Then the quantity $||P_{m}(u^*) - u^*_m||$ converges at a rate no slower than that of $R_{m}$. In particular, we have:
		\begin{equation*}
		||f(x^*_m(\theta)) - f(x^*(\theta))||_\pi \leq L ||R_{m}||_2^2,
		\end{equation*}
		where $\kappa$ is the condition number of $f$. 
	\end{lemma}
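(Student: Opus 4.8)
The plan is to split the claim into two estimates that are proved in sequence: a bound on the optimality gap in function value, which is then fed into a bound on the distance between the two coefficient vectors. First I would record that, since the $B_i$ form an orthonormal basis of $L^2_\pi$, the reconstruction map $\mathcal{I}$ is an isometry. Writing $P_m(x^*) = \mathcal{I}(P_m(u^*))$ for the truncation of the true optimum, this gives
\[
\|P_m(u^*) - u^*_m\| = \|P_m(x^*)(\theta) - x^*_m(\theta)\|_\pi, \qquad \|R_m\|_2 = \|x^*(\theta) - P_m(x^*)(\theta)\|_\pi,
\]
so I can pass freely between coefficient space and $L^2_\pi$ and interpret $R_m$ as the tail $x^* - P_m(x^*)$.

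For the function-value bound I would exploit the two optimality properties directly. Because $x^*(\theta)$ is optimal for $\pi$-almost every $\theta$ it minimises the objective over all of $L^2_\pi$, while $x^*_m$ is a level-$m$ optimum and hence minimises over the subspace $L^2_{m,\pi}$, which contains the feasible competitor $P_m(x^*)$. This produces the sandwich
\[
f(x^*(\theta),\theta) \le f(x^*_m(\theta),\theta) \le f(P_m(x^*)(\theta),\theta),
\]
so the gap $f(x^*_m) - f(x^*)$ is dominated by $f(P_m(x^*)) - f(x^*)$. Applying the Lipschitz assumption to the right-hand term bounds it by $L\,\|P_m(x^*) - x^*\|_\pi = L\,\|R_m\|$, which yields the stated function-value estimate.

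For the coefficient distance I would invoke the local error bound of Lemma \ref{lem::Bep}, since mere convexity would only give a square-root-type relation between distance and function gap. That lemma converts a function-value gap into a distance \emph{linearly}, through the constant $B_{\varepsilon,m}/\varepsilon$; substituting the gap bound $O(\|R_m\|)$ from the previous paragraph gives $\|P_m(x^*) - x^*_m\|_\pi \lesssim (B_{\varepsilon,m}/\varepsilon)\,L\,\|R_m\|$, establishing that this distance converges no slower than $R_m$. This is exactly the role for which the $B_{\varepsilon,m}$ machinery was built: it substitutes for the strong convexity used in \cite{mcmeel2021uncertainty}.

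The main obstacle is precisely this final conversion. Under convexity alone there is no reason the distance between minimisers is linearly controlled by the function-value gap, and it is the finiteness of $B_{\varepsilon,m}$ together with its consequence in Lemma \ref{lem::Bep} that supplies the linear bound. I would also be careful with the norm bookkeeping: the level-$m$ optimum minimises the $\pi$-averaged objective, so the sandwich is cleanest for $\mathbb{E}_\pi(\cdot)$, and passing to the $\|\cdot\|_\pi$ of the pointwise difference uses that $f(x^*_m(\theta),\theta) \ge f(x^*(\theta),\theta)$ holds $\pi$-almost everywhere together with Cauchy--Schwarz. Finally I would reconcile the displayed inequality with the argument above, reading the dependence on $\|R_m\|$ as linear and noting that the constant $\kappa$ does not enter the final bound.
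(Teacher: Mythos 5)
Your proof is correct in substance but takes a genuinely different route from the paper's. The paper's proof applies the first-order convexity inequality at $y = x^*_m(\theta)$ with $x = x^*(\theta)$, rearranges to get $f(x^*_m) - f(x^*) \le \langle g(x^*_m), x^*_m - x^*\rangle$, and then invokes Cauchy--Schwarz together with the bound $\|g\| \le L$ on subgradients of a Lipschitz function; its final step silently replaces $\|x^*_m(\theta) - x^*(\theta)\|_\pi$ by $\|R_m\|$, which is not obviously justified since $x^*_m$ is the level-$m$ optimum rather than the projection $P_m(x^*)$ --- controlling exactly that discrepancy is the first claim of the lemma. Your sandwich $f(x^*) \le f(x^*_m) \le f(P_m(x^*))$ avoids this circularity by comparing against the feasible competitor $P_m(x^*) \in L^2_{m,\pi}$, whose distance to $x^*$ is $\|R_m\|$ by definition, and then applies the Lipschitz property of $f$ itself rather than the boundedness of its subgradients; this reaches the same linear-in-$\|R_m\|$ bound (note the $\|R_m\|_2^2$ in the displayed statement is inconsistent with the paper's own proof, which concludes with $L\|R_m\|$). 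You also supply an argument for the coefficient-distance claim via Lemma \ref{lem::Bep}, which the paper's proof omits entirely. Two caveats on your version: the middle inequality of the sandwich holds for the $\pi$-averaged objective rather than pointwise in $\theta$, since the level-$m$ optimum is only defined through the averaged problem; and passing from $\mathbb{E}_\pi$ of the nonnegative gap to its $\|\cdot\|_\pi$ norm goes the wrong way for Cauchy--Schwarz (on a probability space the $L^2$ norm dominates the $L^1$ norm), so that last step needs an additional boundedness assumption or the conclusion should be stated for $\mathbb{E}_\pi$ of the gap --- though the paper's own proof exhibits essentially the same bookkeeping looseness when it ``takes $\pi$ norms'' of a pointwise inequality.
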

	Note that $R_{m}$ will converge to zero with some basis dependent rate, and so we have that same rate of convergence for $||f(x^*_m(\theta)) - f(x^*(\theta))||_\pi$.
	\begin{proof}
		Recall that $f$ is convex. We will use $g_m$ to refer to a level $m$ subgradient, and $g$ to refer to a full subgradient. We use the first order definition for convexity:
		\begin{equation*}
		    f(x) \geq f(y) + \langle \nabla f(y), x-y \rangle
		\end{equation*}
		with $x = x^*(\theta), y = x^*_m(\theta)$, to find upon rearranging, noting $f(x) - f(y) \leq 0$ and taking $\pi$ norms:
		\begin{equation*}
		    ||f(x^*_m(\theta)) - f(x^*(\theta))||_\pi \leq \langle g(x^*_m(\theta)), x^*_m(\theta) - x^*(\theta) \rangle \leq L ||R_m||
		\end{equation*}
		where in the second inequality, we have used Cauchy-Schwarz, and the fact that as $f$ is Lipschitz, its subgradients are bounded in norm.
	\end{proof}
\subsection{Overall Convergence Rate}
We are now in a position to demonstrate overall convergence of our method. Our high-level strategy will be as follows:
\begin{enumerate}
    \item Once we have reached a sufficiently high $m_k = M$, then an inner loop taken with more than $M$ basis functions will look very similar to those taken with more than $M$ basis functions.
    \item Furthermore, the level $M$ optimum is a small distance away from the true optimum.
    \item The first outer loop taken where we always have $m_k \geq M$ will exhibit the required convergence.
\end{enumerate}
The result is stated formally as follows:
\begin{theorem} \label{theorem::mainresult}
    Let $||f(x^*_M(\theta),\theta) - f^*||_\pi \leq \varepsilon_1$ for a desired $\varepsilon_1$ and some $M$, and consider Algorithm \ref{alg::algo_full}. For each inner loop where we use $m_k$ iterations, let $\rho = \min_{m_1,\ldots,m_k} \left(\rho_{\varepsilon,m_k}\right).$ Let $t_i \geq \frac{\alpha^2 \left(G^2 + V^2\right)}{\rho^2}, K_i = \lceil \log_\alpha \left( \frac{\varepsilon_0}{\varepsilon} \right) \rceil$.
    
    Let $i^*$ be the first outer loop which has $m_k \geq M$ $\forall k$. Then we obtain a $3\varepsilon$-accurate solution at the conclusion of outer loop $i^*$.
\end{theorem}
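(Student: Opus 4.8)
The plan is to decompose the final optimality gap $||f(x^{i^*}(\theta),\theta) - f^*||_\pi$ into a fixed-level RSG contribution, handled by Theorem \ref{theorem::subgradientfixed}, and a truncation contribution, handled by Lemma \ref{lemma::LinConvRemainder}; the extra $\varepsilon$ over the $2\varepsilon$ of the fixed-level case should come entirely from the gap between the level-$m_k$ optima and the true optimum $f^*$. First I would settle the warm-start and reset. By the first assumption every feasible iterate is $\varepsilon_0$-optimal, so the point $u^{i^*-1}$ entering outer loop $i^*$ already satisfies $||f(x^{i^*-1}(\theta),\theta) - f^*||_\pi \leq \varepsilon_0$. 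This is precisely the role of the extra outer loop: the RSG call at loop $i^*$ resets the step size to $\eta_1 = \varepsilon_0/(\alpha(G^2+V^2))$, matching the a priori bound no matter how far the step size had decayed in earlier loops, which may have used $m_k < M$ and hence need not have been accurate.

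Next I would pin every relevant optimum near $f^*$. Since enlarging the basis only enlarges the feasible set, the level-$m$ optimal value $f^*_m$ is monotone decreasing in $m$ and bounded below by $f^*$; with the hypothesis $||f^*_M - f^*||_\pi \leq \varepsilon_1$ and $m_k \geq M$ for every stage of loop $i^*$, the squeeze $f^* \leq f^*_{m_k} \leq f^*_M \leq f^* + \varepsilon_1$ gives $||f^*_{m_k} - f^*||_\pi \leq \varepsilon_1$ uniformly across the inner stages. Taking $\varepsilon_1 \leq \varepsilon$ then places every stage's target within $\varepsilon$ of the true optimum.

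I would then push the argument of Theorem \ref{theorem::subgradientfixed} through the inner loop with two adaptations. Choosing $t_i \geq \alpha^2(G^2+V^2)/\rho^2$ with $\rho = \min_k \rho_{\varepsilon,m_k}$ makes the per-stage iteration count large enough for the worst basis level encountered, so the restarting induction (which uses Lemma \ref{lem::constStepSG} to bound the averaged-iterate gap and Lemma \ref{lem::Bep} to convert a function-value gap into the distance $||x_1 - x^*||$ that feeds it) yields the geometric reduction from $\varepsilon_j$ to $\varepsilon_j/\alpha$ at every stage uniformly; after $K_i = \lceil\log_\alpha(\varepsilon_0/\varepsilon)\rceil$ stages the gap to the level optimum is at most $2\varepsilon$. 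The second adaptation handles the moving target: because $m_k$ is monotone increasing, passing from stage $k$ to stage $k+1$ lowers the reference optimum by only $f^*_{m_k} - f^*_{m_{k+1}} \leq \varepsilon_1$, and by the squeeze the running gap to $f^*$ never exceeds the $2\varepsilon$ level contribution plus this aggregate $\varepsilon_1 \leq \varepsilon$, giving the claimed $3\varepsilon$ bound in expectation.

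The main obstacle I anticipate is exactly this moving-target control. The restarting induction is naturally phrased against a single fixed optimum, so I must verify that switching the reference from $f^*_{m_k}$ to $f^*_{m_{k+1}}$ between stages does not corrupt the geometric decrease. The decisive point is that the truncation error is bounded by $\varepsilon_1$ in aggregate, through monotonicity of $f^*_m$ together with Lemma \ref{lemma::LinConvRemainder}, rather than contributing a fresh $\varepsilon_1$ at each of the $K_i$ stages; making this non-accumulation rigorous, so that the whole inner loop can legitimately be referred to the single reference $f^*$, is where the argument must be most careful.
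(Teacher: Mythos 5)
Your proposal follows essentially the same route as the paper: rerun the induction of Theorem \ref{theorem::subgradientfixed} against the moving level-$m_k$ optimum $f^*_k$, absorb the stage-to-stage change in $\rho$ by taking the minimum $\rho = \min_k \rho_{\varepsilon,m_k}$ (so that the relevant ratio of $\rho$'s appearing in Equation \eqref{equation::THMSecond} is at most one and the geometric decrease survives), and only at the very end add the truncation error $\varepsilon_1 \leq \varepsilon$ to pass from the $2\varepsilon$ fixed-level guarantee to $3\varepsilon$. Your explicit monotonicity/squeeze argument controlling the drift of the reference optima $f^*_{m_k}$ makes rigorous a step the paper treats only implicitly, but it does not alter the structure of the argument.
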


\section{A special class - submodular minimisation}
In this section we use local error bounds to give specialised results on the quantity $B_{\varepsilon,m}$, as we do not know it in general. We will see that this gives us a linear convergence rate for a class of problems that includes submodular minimisation. We edit the definition of the local error bound slightly to fit our UQ setting:
\begin{definition} \label{defn::localerrorbound}
    Say that $f$ admits a local error bound on the level $m$ $\varepsilon$-sublevel set $S_{\varepsilon,m}$ if:
    \begin{equation}
        ||w(\theta) - w^*(\theta)||_\pi \leq c_m||f(w(\theta),\theta) - f^*_m(\theta)||_\pi^{\phi_m}, \forall w \in S_{\varepsilon,m}
    \end{equation}
    where $w^*$ is the closest point in $\Omega^*_m$ to $w$, $0 < \phi_m \leq 1$, and $0 < c_m < \infty$ all constants. We as usual deal with the infinite-dimensional case by removing subscripts $m$.
\end{definition}
Note that this definition implies that $B_{\varepsilon,m} \leq c_m e^{\phi_m}$. We now give a polyhedral error bound condition, and claim that submodular minimisation satisfies this:
\begin{lemma}
    Suppose $\Omega$ is a polyhedron and the epigraph of $f$ is also a polyhedron. There exists a constant $\kappa > 0$ such that:
    \begin{equation}
        ||w - w^*||_\pi \leq \frac{||f(w) - f_*||_\pi}{\kappa}, \forall w \in \Omega
    \end{equation}
    That is, $f$ as a local error bound on $S_\varepsilon$ with $\theta = 1$ and $c = 1/\kappa$.
\end{lemma}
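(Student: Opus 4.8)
The plan is to reduce the claim to a finite-dimensional, pointwise statement (fixing $\theta$) and then lift it to the $\pi$-norm by integration. The essential content is a Hoffman-type error bound for polyhedral convex functions, applied for each fixed $\theta$, together with a uniform-in-$\theta$ lower bound on the associated Hoffman constant. Note that the global bound over all $w \in \Omega$ immediately restricts to give the advertised local error bound on $S_\varepsilon \subseteq \Omega$ with exponent $\phi = 1$ and constant $c = 1/\kappa$, so it suffices to prove the global version.

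First I would fix $\theta$ and exploit the polyhedral hypotheses. Since the epigraph of $f(\cdot,\theta)$ is a polyhedron and $f$ is convex, $f(\cdot,\theta)$ is piecewise linear and can be written as a finite maximum $f(x,\theta) = \max_{i=1,\ldots,p}\bigl(a_i(\theta)^\top x + b_i(\theta)\bigr)$. Writing the feasible polyhedron as $\Omega = \{x : Cx \leq d\}$, the minimiser set $\Omega^*_\theta = \{x \in \Omega : f(x,\theta) = f_*(\theta)\}$ is then the solution set of the finite linear system $Cx \leq d$, $a_i(\theta)^\top x + b_i(\theta) \leq f_*(\theta)$ for $i=1,\ldots,p$, and is therefore itself a (nonempty) polyhedron. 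Next I would invoke Hoffman's lemma on this system: there is a constant $H(\theta)$, depending only on the constraint normals, such that $\mathrm{dist}(x,\Omega^*_\theta) \leq H(\theta)\,\bigl\|\,\bigl((Cx-d)_+,\,(a_i(\theta)^\top x + b_i(\theta)-f_*(\theta))_+\bigr)\,\bigr\|$ for every $x$. Restricting to $w \in \Omega$ annihilates the $(Cw-d)_+$ block, and since $a_i(\theta)^\top w + b_i(\theta) \leq f(w,\theta)$ for each $i$ we obtain $(a_i(\theta)^\top w + b_i(\theta) - f_*(\theta))_+ \leq f(w,\theta) - f_*(\theta)$. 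This yields the pointwise bound $\|w(\theta) - w^*(\theta)\|_2 \leq \kappa(\theta)^{-1}\bigl(f(w(\theta),\theta) - f_*(\theta)\bigr)$ with $\kappa(\theta) = 1/(\sqrt{p}\,H(\theta))$, where $w^*(\theta)$ is the projection of $w(\theta)$ onto $\Omega^*_\theta$.

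Finally I would integrate against $\pi$. Squaring the pointwise bound and integrating gives $\|w - w^*\|_\pi^2 \leq \kappa^{-2}\|f(w,\cdot)-f_*\|_\pi^2$ provided $\kappa \defeq \inf_\theta \kappa(\theta) > 0$; taking square roots delivers the stated inequality. The main obstacle is exactly this uniformity, namely showing the Hoffman constants are bounded below away from zero. This is where the structure of the submodular / min $s,t$-cut problems enters: the combinatorial description of $\Omega$ and of the linear pieces of $f$ is the same for every $\theta$, and only the coefficients $a_i(\theta), b_i(\theta)$ vary (through the edge weights). Since Hoffman's constant depends only on the finite collection of constraint normals, which is stable across $\theta$, and the weights range over a compact set, a continuity and compactness argument yields $\inf_\theta \kappa(\theta) > 0$. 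The delicate point is ruling out degeneration of the constant as the active-face structure changes with $\theta$; I would handle this by partitioning $\Theta$ into finitely many regions of constant combinatorial type and taking the minimum of the (finitely many) constants so obtained.
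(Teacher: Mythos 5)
Your proposal takes essentially the same route as the paper: the paper's argument for this lemma is simply to apply the finite-dimensional polyhedral error bound of \cite{yang2018rsg} (itself a Hoffman-type bound for a piecewise-linear objective over a polyhedron) pointwise for each fixed $\theta$ and then take $\pi$-norms, which is exactly your fix-$\theta$, apply-Hoffman, then square-and-integrate structure. The one place you go beyond the paper is in explicitly flagging the need for $\inf_\theta \kappa(\theta) > 0$ and sketching a compactness/finite-combinatorial-type argument for it; the paper silently assumes this uniformity when it ``takes $\pi$-norms of the results,'' so your treatment is a genuine strengthening rather than a deviation.
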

\begin{lemma}
The Lovasz Extension satisfies the hypotheses of the previous Lemma. The Lovasz Extension truncated to a finite level $m$ also satisfies this Lemma.
\end{lemma}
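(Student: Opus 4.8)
The plan is to verify the two hypotheses of the previous Lemma for $f^L$ separately: that the feasible set $\Omega$ is a polyhedron, and that the epigraph of $f^L$ is a polyhedron. The first is immediate, since the natural domain of the Lovasz Extension is the unit cube $[0,1]^{|V|}$, which is cut out by the finitely many linear inequalities $0 \le x_i \le 1$ and is therefore a polyhedron. The second hypothesis carries the real content, and I would establish it by exhibiting $f^L$ as a pointwise maximum of finitely many affine functions.

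To do this I would use the standard representation $f^L(x) = \max_{v} \langle v, x \rangle$, where $v$ ranges over the (finitely many) vertices of the base polytope $B(f)$ of the submodular function $f$; equivalently, one may invoke the permutation decomposition, in which $[0,1]^{|V|}$ is partitioned into the finitely many polyhedral regions on which a fixed ordering of the coordinates holds, and on each such region $f^L$ coincides with a single linear functional. Either description writes $f^L = \max_{j} \langle a_j, x \rangle$ for finitely many $a_j$, so that its epigraph $\{(x,t) : t \ge \langle a_j, x \rangle \ \forall j\}$ is a finite intersection of halfspaces, hence a polyhedron. This is exactly the convex, piecewise-linear structure already asserted for $f^L$ earlier, now made quantitative, so the previous Lemma applies and yields the constant $\kappa$.

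For the truncated statement I would argue that truncation to level $m$ is a linear operation and therefore preserves both hypotheses. Restricting the coefficient vector to its first $m$ entries (equivalently, intersecting the feasible region with the subspace $L^2_{m,\pi}$) replaces $\Omega$ by $\Omega \cap L^2_{m,\pi}$, an intersection of a polyhedron with a linear subspace, which is again a polyhedron; and the map $u \mapsto x(\theta) = \sum_{i=1}^m u_i B_i(\theta)$ is linear, so composing it with the polyhedral $f^L$ yields a function whose epigraph is the preimage of a polyhedron under a linear map, again polyhedral. Hence the truncated extension inherits the hypotheses of the previous Lemma.

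The main obstacle I anticipate is reconciling the finite-dimensional, pointwise polyhedrality of $f^L$ with the $\pi$-norm formulation of the error bound in Definition \ref{defn::localerrorbound}. For a single $\theta$ the polyhedral bound holds with a constant $\kappa_\theta$; to obtain the $\pi$-norm bound one must pass from the pointwise estimates to their integrated counterpart, which requires a uniform lower bound $\kappa = \inf_\theta \kappa_\theta > 0$ on the polyhedral constants, so that squaring and integrating over $\theta$ is legitimate. I would therefore spend the most care arguing that, because the combinatorial structure of the base polytope takes only finitely many configurations as $\theta$ ranges over its support, the constants $\kappa_\theta$ stay bounded away from zero, so that the pointwise bounds assemble into the required $\pi$-norm bound and the truncation argument goes through unchanged.
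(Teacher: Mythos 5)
Your route is considerably more explicit than the paper's: the paper's own proof is two sentences long, observing that for each fixed $\theta$ the Lovasz Extension is a real-valued convex (polyhedral) function, so the corresponding lemmas of \cite{yang2018rsg} apply pointwise, and then ``taking the $\pi$ norms of the results.'' Your verification that $[0,1]^{|V|}$ is a polyhedron and that the epigraph of $f^L$ is polyhedral (via $f^L(x)=\max_j\langle a_j,x\rangle$ over the vertices of the base polytope, equivalently the permutation decomposition) fills in exactly what the paper delegates to the citation, and your treatment of the truncated case via linearity of the truncation map is sound. More importantly, you correctly isolate the step that carries all the risk: passing from the pointwise polyhedral error bound with constant $\kappa_\theta$ to the $\pi$-norm bound of Definition \ref{defn::localerrorbound} requires $\inf_\theta \kappa_\theta > 0$, a point the paper does not acknowledge at all.

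However, your proposed resolution of that step does not work. The claim that finitely many combinatorial configurations of the base polytope force the $\kappa_\theta$ to stay bounded away from zero is false: the Hoffman-type constant can degenerate continuously \emph{within} a single combinatorial configuration as $\theta$ approaches a transition value. The paper's own Example \ref{exmp::IntroConvex} exhibits this. For $f(x_1,x_2)=2|x_1-x_2|+\theta x_1+3|1-x_2|$ with $\theta<2$ the minimiser is $(1,1)$ with value $\theta$; the feasible point $(0,1)$ satisfies $\|(0,1)-(1,1)\|=1$ while $f(0,1)-f^*=2-\theta$, which forces $\kappa_\theta\le 2-\theta\to 0$ as $\theta\to 2^{-}$. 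So if the support of $\pi$ contains a neighbourhood of $\theta=2$, no uniform $\kappa$ exists and the integrated bound fails as stated. Closing the gap requires an additional hypothesis, e.g.\ that the support of $\pi$ is compact and avoids the finitely many degenerate parameter values at which the minimising face changes, or simply an explicit assumption that $\inf_\theta\kappa_\theta>0$. This gap is present, silently, in the paper's proof as well; you deserve credit for surfacing it, but the argument you offer to close it is not valid.
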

\begin{proof}
    In this case, we have that the function $f$ is also convex for each value of $\theta$, when we fix $\theta$ and treat the function as a real-valued function.
    
    Because of this, we can simply take the proofs of the corresponding Lemmas in \cite{yang2018rsg} and take the $\pi$ norms of the results to reach our conclusion.
\end{proof}
We can now substitute in the constants we've derived to Corollary \ref{cor::FixedLevelGen} to get the final result on the fixed level convergence:
\begin{corollary} \label{cor::NewLinear}
    Suppose the first two Assumptions hold. For minimising the Lovasz Extension, the iteration complexity for a $\varepsilon$ optimal solution (in terms of function value) is $\mathcal{O}\left(\frac{\alpha^2 \left(G^2+V^2\right) }{\kappa^2} \lceil \log_{\alpha} \left( \frac{\varepsilon_0}{\varepsilon}\right) \rceil \right)$ provided $t = \frac{\alpha^2 \left(G^2+V^2\right)}{\kappa^2}$, $K = \lceil \log_{\alpha} \left( \frac{\varepsilon_0}{\varepsilon}\right) \rceil$. By setting $\varepsilon = \varepsilon\kappa$, we can also find an $\varepsilon$-optimal solution in terms of the iterate.
\end{corollary}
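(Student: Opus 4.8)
The plan is to specialise the generic fixed-level complexity of Corollary \ref{cor::FixedLevelGen} by controlling $B_{\varepsilon,m}$ through the polyhedral structure, and then to read off the iterate bound directly from the error bound itself. The key observation driving everything is that for the Lovasz Extension the local error bound holds with exponent exactly $\phi_m = 1$, which is precisely what removes the $\varepsilon$-dependence from the per-stage iteration count.

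First I would invoke the two preceding Lemmas: since $\Omega$ and the epigraph of the Lovasz Extension (and of its level-$m$ truncation) are polyhedral, $f$ admits a polyhedral error bound $\|w(\theta)-w^*(\theta)\|_\pi \le \|f(w(\theta),\theta)-f^*_m(\theta)\|_\pi/\kappa$ on $S_{\varepsilon,m}$, i.e. a local error bound in the sense of Definition \ref{defn::localerrorbound} with $c_m = 1/\kappa$ and $\phi_m = 1$. By the remark following that definition, this yields $B_{\varepsilon,m} \le c_m \varepsilon^{\phi_m} = \varepsilon/\kappa$. Equivalently, via Lemma \ref{lem::Bep}, $\rho_{\varepsilon,m} \ge \varepsilon/B_{\varepsilon,m} \ge \kappa$, so the first-order residual is bounded below by a constant that does not depend on $\varepsilon$.

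Next I would substitute these into Theorem \ref{theorem::subgradientfixed}. The admissible per-stage length there is $t \ge \alpha^2(G^2+V^2)/\rho_{\varepsilon,m}^2$; since $\rho_{\varepsilon,m} \ge \kappa$, the choice $t = \alpha^2(G^2+V^2)/\kappa^2$ is admissible and, crucially, free of $\varepsilon$. With $K = \lceil\log_\alpha(\varepsilon_0/\varepsilon)\rceil$ stages the total iteration count is $tK$, giving the claimed $\mathcal{O}\big(\tfrac{\alpha^2(G^2+V^2)}{\kappa^2}\lceil\log_\alpha(\varepsilon_0/\varepsilon)\rceil\big)$; one sees concretely how the factor $B_{\varepsilon,m}^2/\varepsilon^2$ appearing in Corollary \ref{cor::FixedLevelGen} collapses to $1/\kappa^2$ precisely because of the exponent $\phi_m=1$, which is the source of the (essentially) linear rate. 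Finally, for the statement in terms of the iterate I would apply the polyhedral bound directly: running the algorithm to function-value accuracy $\varepsilon\kappa$ (i.e. replacing $\varepsilon$ by $\varepsilon\kappa$ in the above) gives $\|w(\theta)-w^*(\theta)\|_\pi \le \|f(w(\theta),\theta)-f^*_m(\theta)\|_\pi/\kappa \le \varepsilon$.

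The main obstacle I expect is not the algebra but justifying that $\kappa$ may be taken uniform in the truncation level $m$: the complexity is advertised as both $\varepsilon$- and $m$-independent, so one must check that the polyhedral constant for the level-$m$ truncated Lovasz Extension does not degrade as $m$ grows (the second preceding Lemma asserts the truncation is again polyhedral, but what is really required is a quantitative, $m$-uniform lower bound on $\kappa$). A secondary point to verify carefully is the \emph{exactness} $\phi_m = 1$ rather than $\phi_m < 1$, since any smaller exponent would leave a residual power of $\varepsilon$ in $B_{\varepsilon,m}$ and thereby destroy the constant, $\varepsilon$-free per-stage length that underlies the linear rate.
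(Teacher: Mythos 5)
Your proposal is correct and follows essentially the same route as the paper: the paper's argument is precisely to substitute the polyhedral error-bound constants $c_m = 1/\kappa$, $\phi_m = 1$ (hence $B_{\varepsilon,m}\le \varepsilon/\kappa$, equivalently $\rho_{\varepsilon,m}\ge\kappa$ via Lemma~\ref{lem::Bep}) into Corollary~\ref{cor::FixedLevelGen}, which is exactly what you do, and your derivation of the iterate bound by rescaling $\varepsilon\mapsto\varepsilon\kappa$ matches the intended reading. Your closing caveat about the $m$-uniformity of $\kappa$ is a fair observation, but the paper itself does not address it and simply treats $\kappa$ as a single constant.
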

where the corresponding asymptotic convergence, and eventual convergence with the above rate follows from the generic analysis.

\subsection{Rounding to a discrete set}




In general we note that as we will not have a unique minimum, and in fact this procedure will not even output the minimum with the least or most elements for a fixed $\theta$, it is a little ambiguous what information we want to gain from the output of our algorithm $x^*(\theta)$. We propose a rounding procedure that will convert $x^*(\theta)$ into a discrete set $S(\theta)$ with the following properties: so long as our Lovasz Extension has converged within $\varepsilon$ of the true optimum, then for each fixed $\theta$, every element of the set $S(\theta)$ belongs to a minimiser of the submodular function $f$.

To do this, suppose we run RSGUQ until $\varepsilon$ convergence. Then taking a threshold of $1-\varepsilon$, we add the element $e$ to $S(\theta)$ when its corresponding continuous variable is above $(1-\varepsilon)$. 

Note that if we use a constant threshold in this manner, we will not achieve an actual minimiser as a set in general, even if we are arbitrarily close to the minimum, as two elements $x_i(\theta), x_j(\theta)$ that are equal at the optimum may be extremely close together at our approximate optimum, but on either side of the threshold.

\subsection{Challenges and a new basis set}
All of this has been done to gain information about $x^*(\theta)$, where we are solving a submodular minimisation problem. In this case, the projection we must solve is that $x(\theta) \in [0,1]$ for all $\theta$. This projection is non-trivial in general, its hard even to find the maximum of a Fourier Series.


As the projection in this case is difficult and the main obstacle to the methods speed, we offer an alternative to computing $x^*(\theta)$. We will refer to this as the piecewise constant method: in the case that $\theta$ is one-dimensional, we can decompose the domain of $\theta$ into progressively more intervals, and write $x^*(\theta)$ as a piecewise constant function. This has the advantage of making the required projection trivial, but the disadvantage of being difficult to scale up in dimensions of $\theta$ in a way that we can still control the number of basis functions effectively.


In our experiments, we will use the piecewise constant method. First however, we give some convergence results for it that are analogous to Section \ref{sec::GenSubUQ}.

\subsection{Piecewise Constant Method} \label{miniSec::PiecewiseConst}
Until now, the way we deal with increasing $m_k$ in the RSGUQ method has been trivial - we simply add the next basis function to our representation for $x$, and keep all basis functions we have. However, in the piecewise constant, we are removing one basis function and replacing it with two new ones at each step. This allows us to retain orthogonality and control the speed of increasing $m_k$ easily.

\begin{algorithm} 
    \caption{Piecewise constant method - increasing $m$}
    \label{alg::piececonst}
        \begin{algorithmic}
        \STATE{\textbf{Input}: One-dimensional distribution $\pi$ for $\theta$, representation of $x$ as a piecewise constant function with $m$ pieces.}
        \STATE{\textbf{Output}: A representation of $x$ as a piecewise constant function with $m + 1$ pieces}
        \STATE{Let $x = \sum_i c_i \mathcal{X}_i$, where $\mathcal{X}_i$ is an indicator function over some continuous interval, and intervals are sorted in ascending order.}
        \STATE{Let $\theta'$ be a sample from the distribution $\pi$.}
        \STATE{Let $I$ be the index such that $\theta'$ falls in the interval $\mathcal{X}_I$.}
        \STATE{Subdivide the interval indexed by $\mathcal{X}_I$ into two continuous disjoint intervals with the breakpoint given by $\theta'$, and with $x$ taking the value $c_I$ on both new intervals}
        \STATE{Return $x$.}
        \end{algorithmic}
\end{algorithm}

In Algorithm \ref{alg::piececonst}, we detail what happens when we increase $m_k$. In the remainder of this subsection, we detail some results on the convergence of RSGUQ using this method. We note first that we assume throughout that the function $f$ is defined on a bounded domain. This is in contrast to previous Sections, but true for our submodular example. The first result follows in the same way as Theorem \ref{theorem::subgradientfixed}:

\begin{lemma} \label{lem::PC_Fixed}
    Suppose that for the duration of one outer loop, the piecewise constant representation of $x$ is fixed. Let $f^*$ be the optimal function value when the input is restricted to this representation. In our one outer loop, $t \geq \frac{\alpha^2 G^2}{\rho_{\varepsilon}^2}$ and $K = \lceil \log_{\alpha} \left( \frac{\varepsilon_0}{\varepsilon}\right) \rceil$. Then we obtain a 2$\varepsilon$ accurate solution at the end of the loop.
\end{lemma}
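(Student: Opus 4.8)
The plan is to reduce the claim to the fixed-level result of Theorem \ref{theorem::subgradientfixed} by observing that, once the piecewise-constant representation is frozen for the entire outer loop, the feasible set becomes a fixed finite-dimensional subspace of $L^2_\pi$ spanned by the $\pi$-orthogonal indicators $\{\mathcal{X}_i\}$. Minimising $f$ over this subspace is then exactly the fixed-representation convex problem whose optimal value is $f^*$, and the box projection onto $[0,1]$ reduces to coordinatewise clipping of the coefficients $c_i$, so the subgradient subroutine (Algorithm \ref{algo_sub}) is well defined and Assumption 1 (unbiased, bounded subgradient) transfers to the coefficient space. I would first record this reduction, noting that because the indicators are $\pi$-orthogonal the restriction to the representation is an orthogonal projection, so the level-set distance $B_\varepsilon$ and the first-order residual $\rho_\varepsilon$ are well defined and inherit finiteness and positivity from Assumption 2.

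Next I would run the same restart induction as in Theorem \ref{theorem::subgradientfixed}. Define $\varepsilon_k = \varepsilon_0/\alpha^k$ and note the algorithm's step sizes satisfy $\eta_k = \varepsilon_k/G^2$ (from $\eta_1 = \varepsilon_0/(\alpha G^2)$ together with $\eta_{k+1} = \eta_k/\alpha$). The claim is that $\mathbb{E}(f(x_k) - f^*) \le \varepsilon_k$ for every stage $k$, with the base case $k=0$ supplied by the $\varepsilon_0$ bound of Assumption 1. For the inductive step, the start of stage $k$ lies in the $\varepsilon_{k-1}$-sublevel set in expectation, so Lemma \ref{lem::Bep} bounds its distance to the optimal set by $\tfrac{B_\varepsilon}{\varepsilon}\varepsilon_{k-1} \le \varepsilon_{k-1}/\rho_\varepsilon$, using $\rho_\varepsilon \ge \varepsilon/B_\varepsilon$. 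Feeding this distance into Lemma \ref{lem::constStepSG} makes the first term equal $G^2\eta_k/2 = \varepsilon_k/2$, while the second term is at most $\varepsilon_k/2$ exactly when $t \ge \alpha^2 G^2/\rho_\varepsilon^2$ (here one uses $\varepsilon_{k-1} = \alpha\varepsilon_k$ to cancel one power of $\varepsilon_k$); summing gives $\varepsilon_k$.

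Finally, after $K = \lceil \log_\alpha(\varepsilon_0/\varepsilon)\rceil$ stages we have $\varepsilon_K = \varepsilon_0/\alpha^K \le \varepsilon$, which yields the claimed $2\varepsilon$-accurate solution once the sublevel-set threshold bookkeeping is accounted for. The main obstacle I anticipate is not the induction itself, which is mechanically identical to the fixed-level proof, but justifying that the error-bound apparatus genuinely survives the change of representation: I must confirm that the piecewise-constant subspace still admits a finite $B_\varepsilon$ and a strictly positive $\rho_\varepsilon$, and that the truncated subgradient expressed in the indicator basis remains an unbiased estimator with the same second-moment bound. Orthogonality of the indicators is what makes all of this routine, and I would isolate that point before invoking Lemmas \ref{lem::constStepSG} and \ref{lem::Bep}. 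A secondary point to pin down is the exact constant ($\varepsilon$ versus $2\varepsilon$), which arises from the gap at the final stage relative to the $\varepsilon$-threshold defining the sublevel set rather than from the convergence rate.
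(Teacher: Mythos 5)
Your overall strategy is the right one and matches the paper's: the paper proves this lemma simply by observing that, with the representation frozen, the argument of Theorem \ref{theorem::subgradientfixed} applies verbatim to the finite-dimensional subspace spanned by the indicators, and your reduction via orthogonality of the $\mathcal{X}_i$ and the triviality of the coordinatewise projection is exactly the right preamble. However, two steps in your induction are not correct as written. First, the inequality $\frac{B_\varepsilon}{\varepsilon}\varepsilon_{k-1} \le \varepsilon_{k-1}/\rho_\varepsilon$ points the wrong way: Lemma \ref{lem::Bep} gives $\rho_\varepsilon \ge \varepsilon/B_\varepsilon$, hence $1/\rho_\varepsilon \le B_\varepsilon/\varepsilon$, so the $B_\varepsilon/\varepsilon$ distance bound is the \emph{weaker} of the two and cannot be upgraded to the $1/\rho_\varepsilon$ bound you need to match the hypothesis $t \ge \alpha^2 G^2/\rho_\varepsilon^2$. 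The correct tool is the sharper bound $\|x_{k-1}(\theta) - x^\dagger_{k-1,\varepsilon}(\theta)\|_\pi \le \frac{1}{\rho_\varepsilon}\,\mathbb{E}_\pi\bigl(f(x_{k-1}(\theta),\theta) - f(x^\dagger_{k-1,\varepsilon}(\theta))\bigr)$ of Lemma \ref{lem::wTechLem}, which is what the proof of Theorem \ref{theorem::subgradientfixed} invokes; if you insist on working with $B_\varepsilon$ you must instead strengthen the hypothesis on $t$ to $t \ge \alpha^2 G^2 B_\varepsilon^2/\varepsilon^2$ as in Corollary \ref{cor::FixedLevelGen}.

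Second, your inductive hypothesis $\mathbb{E}\bigl(f(x_k) - f^*\bigr) \le \varepsilon_k$ does not close. In the case $x_{k-1} \notin \mathcal{S}_\varepsilon$ you obtain $\mathbb{E}\bigl(f(x_k) - f(x^\dagger_{k-1,\varepsilon})\bigr) \le \varepsilon_k$, and since $f(x^\dagger_{k-1,\varepsilon}) - f^* = \varepsilon$ on the boundary of the sublevel set, the best you can conclude is $\mathbb{E}\bigl(f(x_k) - f^*\bigr) \le \varepsilon_k + \varepsilon$. The additive $\varepsilon$ is therefore not a final-stage bookkeeping item as you suggest; it must be carried as part of the inductive invariant at every stage, which is precisely how the paper states it ($\mathbb{E}_\pi(f(x_k(\theta),\theta) - f^*_k) \le \varepsilon_k + \varepsilon$), and it is also needed in the distance estimate, where one uses $\mathbb{E}_\pi\bigl(f(x_{k-1}) - f(x^\dagger_{k-1,\varepsilon})\bigr) \le (\varepsilon_{k-1} + \varepsilon) - \varepsilon = \varepsilon_{k-1}$. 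With the invariant corrected, $\varepsilon_K \le \varepsilon$ at $K = \lceil \log_\alpha(\varepsilon_0/\varepsilon)\rceil$ gives the $2\varepsilon$ conclusion directly. Both issues are repairable without changing your architecture, but as written the inductive step fails.
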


The next result is an analogue to Lemma \ref{lemma::LinConvRemainder}, and follows from the fact that $f$ is Lipschitz continuous with constant $L$. Before this however, we need to define precisely what truncated optima are in this setting. To do so, we first precisely define how we partition the domain of $\Theta$:
\begin{definition}
    Let $\Theta$ be a one-dimensional bounded interval from $a$ to $b$ (possibly not including endpoints) that serves as the domain of $\theta$. We call a sequence $\mathcal{C} = (c_1,\ldots,c_n)$ a partition of length $n+1$, and it corresponds to the partition of this interval into the parts $(a,c_1), (c_1,c_2), \ldots, (c_n, b)$ (the choice of what to do with endpoints is arbitrary.
    
    A partition $\mathcal{C}_1$ is said to be finer than another partition $\mathcal{C}_2$, if every interval induced by the partition $\mathcal{C}_1$ is included in an interval induced by $\mathcal{C}_2$, possibly excluding endpoints.
\end{definition}
\begin{lemma}
    Let $\mathcal{C}$ be a partition of length $n+1$, and let $f^*_C(\theta)$ be the optimal function value under this partition. Further, let $f^*(\theta)$ be the true optimal function value.
    
    Then on each interval $\mathcal{I}$ induced by $\mathcal{C}$, the value of $f^*_C(\theta)$ for $\theta \in \mathcal{I}$ is the average value of $f^*(\theta)$ over $\mathcal{I}$.
\end{lemma}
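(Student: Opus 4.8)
The plan is to exploit the fact that restricting $x$ to be piecewise constant on $\mathcal{C}$ makes the objective \emph{separate} across the intervals induced by $\mathcal{C}$. Writing $x\equiv c_i$ on each interval $\mathcal{I}_i$ and using that the intervals are disjoint and cover $\Theta$, the restricted objective becomes
\begin{equation*}
\int_\Theta f(x(\theta),\theta)\,\pi(d\theta) = \sum_i \int_{\mathcal{I}_i} f(c_i,\theta)\,\pi(d\theta).
\end{equation*}
Because the scalar $c_i$ appears only in the $i$-th summand (equivalently, the indicators $\{\mathbf 1_{\mathcal{I}_i}\}$ are mutually $\pi$-orthogonal), the minimization decouples into independent scalar problems $c_i^\ast = \arg\min_{c}\int_{\mathcal{I}_i} f(c,\theta)\,\pi(d\theta)$, one per interval. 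This decoupling is the piecewise-constant analogue of the projection structure used in Lemma \ref{lemma::LinConvRemainder}. I would then take $f^\ast_C$ to be the piecewise constant function whose value on $\mathcal{I}_i$ is the average objective attained by the restricted optimizer, namely $\frac{1}{\pi(\mathcal{I}_i)}\int_{\mathcal{I}_i} f(c_i^\ast,\theta)\,\pi(d\theta)$.

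With this reduction the lemma amounts to the single identity, on each interval,
\begin{equation*}
\int_{\mathcal{I}_i} f(c_i^\ast,\theta)\,\pi(d\theta) = \int_{\mathcal{I}_i} f^\ast(\theta)\,\pi(d\theta),
\end{equation*}
after which dividing by $\pi(\mathcal{I}_i)$ yields exactly that $f^\ast_C$ on $\mathcal{I}_i$ equals the $\pi$-average of $f^\ast$ over $\mathcal{I}_i$. The ``$\geq$'' direction is immediate, since for every fixed $\theta$ we have $f(c_i^\ast,\theta)\geq \min_x f(x,\theta)=f^\ast(\theta)$, so the interval integral of the left side dominates. The real content is the reverse inequality, i.e.\ the interchange of the interval-minimization with the integration,
\begin{equation*}
\min_{c}\int_{\mathcal{I}_i} f(c,\theta)\,\pi(d\theta) \le \int_{\mathcal{I}_i}\min_{x} f(x,\theta)\,\pi(d\theta).
\end{equation*}

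The main obstacle is justifying this interchange. It holds exactly when a single constant $c$ simultaneously minimizes $f(\cdot,\theta)$ for $\pi$-almost every $\theta\in\mathcal{I}_i$; equivalently, when the pointwise minimizer $x^\ast(\theta)$ may be taken constant across $\mathcal{I}_i$. In the submodular/Lovász setting this is precisely the situation Algorithm \ref{alg::piececonst} is built to reach: the minimizing face of the Lovász Extension is locally constant in $\theta$ and switches only at finitely many breakpoints, so once the refinement has isolated each breakpoint in its own interval, each $\mathcal{I}_i$ sits inside a stability region on which a common minimizer $c_i^\ast = x^\ast(\theta)$ exists and the interchange is an equality. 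I would therefore establish the identity by exhibiting this common minimizer on each interval, invoking the polyhedral/local-error-bound structure developed earlier to guarantee the minimizing face is locally constant. I would also note that for a coarse partition whose interval straddles a breakpoint only the inequality $f^\ast_C\ge \overline{f^\ast}$ survives, with the residual gap controlled by the Lipschitz constant $L$ in the same manner as Lemma \ref{lemma::LinConvRemainder}.
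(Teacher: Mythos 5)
Your proof addresses a different object than the paper's proof does, and the difference matters. You read $f^*_C$ as the optimal \emph{value} of the restricted problem, $\min_{c}\int_{\mathcal{I}} f(c,\theta)\,\pi(d\theta)$ on each interval, and then try to show this equals $\int_{\mathcal{I}} f^*(\theta)\,\pi(d\theta)$. The paper's proof instead treats $f^*_C$ as the $L^2_\pi$-best piecewise-constant approximation of the value function $f^*(\theta)$ itself: it minimises
\begin{equation*}
\int_{\mathcal{I}} \bigl(c - f^*(\theta)\bigr)^2\,\pi(d\theta)
\end{equation*}
over the constant $c$, differentiates under the integral sign, and reads off that the optimal $c$ is the $\pi$-average of $f^*$ over $\mathcal{I}$. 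Under that reading the lemma is a one-line least-squares computation and needs none of the structure you invoke.

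Under your reading, you have correctly identified the genuine obstruction: $\min_c \int_{\mathcal{I}} f(c,\theta)\,\pi(d\theta) \geq \int_{\mathcal{I}} \min_x f(x,\theta)\,\pi(d\theta)$ always, with equality only when a single $c$ minimises $f(\cdot,\theta)$ for $\pi$-a.e.\ $\theta\in\mathcal{I}$. Your repair --- that in the Lov\'asz-extension setting the minimising face is locally constant in $\theta$, so equality holds once the partition isolates the breakpoints --- is sound as far as it goes, but it proves a \emph{conditional} version of the lemma (only for sufficiently fine partitions, only in the polyhedral setting), whereas the lemma is stated unconditionally and is used that way in Lemma \ref{lem::PC_OptimumConvergence}. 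So relative to the statement as written, your argument has a gap that the paper closes not by supplying the missing hypothesis but by defining $f^*_C$ differently. If you want to keep your interpretation, the honest conclusion is the inequality $f^*_C \geq \overline{f^*}$ on each interval together with an $\mathcal{O}(\mu)$ gap bound via Lipschitzness, which is in fact all that the downstream convergence argument actually needs.
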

\begin{proof}
    The result is clear on considering the equivalent quantity to be minimised, where $c$ is a constant:
    \begin{equation*}
        \int_{\mathcal{I}} (c - f^*(\theta))^2 \pi(d\theta), 
    \end{equation*}
    with the result follows by differentiating under the integral sign, then noting the $f^*$ term will integrate to the average value.
\end{proof}
From here, we can now give an analogue to Lemma \ref{lemma::LinConvRemainder}:
\begin{lemma} \label{lem::PC_OptimumConvergence}
    Let $M$ be the value of the element of the subdifferential of $f$ throughout its domain with maximal absolute value. Let $\mathcal{C}$ have an interval with maximum $\pi$-measure $\mu$. Then we have:
    \begin{equation*}
        ||f(x(\theta),\theta) - f^*_n|| \leq \mathcal{O}(\mu)
    \end{equation*}
\end{lemma}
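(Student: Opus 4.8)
The plan is to treat this as the piecewise-constant counterpart of Lemma \ref{lemma::LinConvRemainder}: I would bound the gap between the true optimal value function $f^*(\theta) = f(x^*(\theta),\theta)$ and its partition optimum $f^*_n$ by controlling how much $f^*$ can vary across a single interval of the partition $\mathcal{C}$, and then converting interval size into the $\pi$-measure $\mu$.

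First I would invoke the preceding lemma, which identifies the partition optimum on each induced interval $\mathcal{I}$ with the $\pi$-average of the true optimal value, so that for $\theta \in \mathcal{I}$ we have
\[
f^*_n(\theta) = \frac{1}{\pi(\mathcal{I})}\int_{\mathcal{I}} f^*(\theta')\,\pi(d\theta').
\]
Because an average is dominated by the supremum of pairwise differences, the pointwise error on $\mathcal{I}$ is then controlled by the oscillation of $f^*$ over that interval,
\[
|f^*(\theta) - f^*_n(\theta)| \le \sup_{\theta,\theta' \in \mathcal{I}} |f^*(\theta) - f^*(\theta')|.
\]

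The crux is to bound this oscillation, and for this I would establish that the value function $\theta \mapsto f^*(\theta)$ is Lipschitz in $\theta$ with constant $M$. This follows from the standard value-function estimate: writing $f^*(\theta) = f(x^*(\theta),\theta)$ and using the optimizer $x^*(\theta')$ as a suboptimal competitor at $\theta$ gives $f^*(\theta) - f^*(\theta') \le f(x^*(\theta'),\theta) - f(x^*(\theta'),\theta')$, and the right-hand side is at most $M|\theta-\theta'|$ precisely because $M$ bounds the $\theta$-directional subdifferential of $f$; symmetrising yields $|f^*(\theta)-f^*(\theta')| \le M|\theta-\theta'|$. Hence the oscillation over $\mathcal{I}$ is at most $M\,\mathrm{diam}(\mathcal{I})$. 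Since $\mathcal{I}$ has $\pi$-measure at most $\mu$ and $\Theta$ is bounded, its diameter is $\mathcal{O}(\mu)$, so the pointwise error is $\mathcal{O}(M\mu) = \mathcal{O}(\mu)$ uniformly in $\theta$; integrating against $\pi$ preserves this, giving $\|f(x(\theta),\theta) - f^*_n\|_\pi \le \mathcal{O}(\mu)$.

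The main obstacle I anticipate is the step that establishes the $\theta$-Lipschitz continuity of the value function together with the correct reading of $M$: the value-function inequality only closes if $M$ is interpreted as the maximal absolute value of the subgradient of $f$ in the $\theta$ argument, rather than the $x$-subgradient used elsewhere in the paper. A secondary subtlety is the comparison between the $\pi$-measure $\mu$ and the Euclidean length of an interval, which is what allows the oscillation bound to be phrased purely in terms of $\mu$; cleanly, this requires either a density for $\pi$ bounded away from zero or absorbing the comparison constant into the $\mathcal{O}(\cdot)$ notation.
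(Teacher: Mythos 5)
Your proof follows essentially the same route as the paper's (one-line) argument: control the oscillation of the optimal value over each induced interval via a Lipschitz-type bound, note that each interval has $\pi$-measure at most $\mu$, and combine over the partition. In fact your version is more careful than the paper's, which simply invokes ``$f$ is $L$-Lipschitz''; your observations that what is really needed is Lipschitz continuity of the value function $\theta \mapsto f^*(\theta)$ in $\theta$ (with $M$ read as a bound on the $\theta$-subgradient, not the $x$-subgradient used elsewhere), and that passing from interval length to $\pi$-measure requires a density assumption or an absorbed constant, correctly pin down exactly the details the paper's proof leaves implicit.
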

\begin{proof}
    The result follows by noting that $f$ is $L$-lipschitz, and intervals can have at most $\mu$ measure, which bounds the error in each interval.
\end{proof}
To finish this discussion, we would like a control on the maximum measure. We state the following result without loss of generality assuming $\Theta = [0,1]$, and that $\theta$ is uniformly distributed:
\begin{lemma}
    Suppose while running Algorithm \ref{alg::piececonst}, we have drawn $n$ uniform variables $\theta_1 \leq \ldots \leq \theta_n$. Let $W_1 = \theta_1$ and $W_i = \theta_i - \theta_{i-1}$, $i > 1$. Further, let $X_1,\ldots, X_{n+1}$ be $n+1$ standard exponential variables, and let $X_M$ be the maximum of these. Then the expected measure of the piece with maximum measure is distributed as:
    \begin{equation*}
        E_n = \mathbb{E}\left(\max_i W_i\right) = \frac{H_{n-1}}{n+1}
    \end{equation*}
    where $H_n$ is the $n$th harmonic number. We therefore have:
    \begin{equation*}
        E_n = \mathcal{O}\left( \frac{\log n}{n}\right)
    \end{equation*}
\end{lemma}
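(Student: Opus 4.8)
The plan is to recognise $\max_i W_i$ as the maximum spacing generated by $n$ i.i.d.\ uniform points on $[0,1]$ and to evaluate its expectation through the classical exponential (Dirichlet) representation of uniform spacings. The starting observation is that the $n+1$ gaps cut out by the order statistics $\theta_1 \le \cdots \le \theta_n$—that is, the $W_i$ together with the final gap $1-\theta_n$, which should be appended to make $n+1$ exchangeable spacings—are jointly distributed as $(X_1/S,\ldots,X_{n+1}/S)$, where the $X_i$ are the $n+1$ standard exponentials in the statement and $S=\sum_{i=1}^{n+1}X_i$. Under this identification $\max_i W_i = X_M/S$ with $X_M=\max_i X_i$, exactly the objects the lemma introduces.

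The key structural fact I would invoke is that the normalised vector $(X_1/S,\ldots,X_{n+1}/S)$ is independent of the total $S$; this is the standard consequence of the $X_i$ being i.i.d.\ $\mathrm{Gamma}(1,1)$ with a common scale, so that the direction on the simplex and the overall magnitude decouple. Consequently $\max_i W_i = X_M/S$ is independent of $S$, and writing $X_M = S\cdot(X_M/S)$ and taking expectations gives the factorisation $\mathbb{E}[X_M]=\mathbb{E}[S]\,\mathbb{E}[\max_i W_i]$, i.e.
\begin{equation*}
\mathbb{E}\!\left[\max_i W_i\right] = \frac{\mathbb{E}[X_M]}{\mathbb{E}[S]}.
\end{equation*}

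It then remains to evaluate the two expectations. Immediately $\mathbb{E}[S]=n+1$. For the numerator I would use R\'enyi's representation of exponential order statistics: the successive gaps $X_{(k)}-X_{(k-1)}$ of $m=n+1$ i.i.d.\ standard exponentials are independent with $X_{(k)}-X_{(k-1)}\sim\mathrm{Exp}(m-k+1)$, so $X_M=\sum_{k=1}^{m}(X_{(k)}-X_{(k-1)})$ has mean $\sum_{k=1}^{m}1/(m-k+1)=H_m=H_{n+1}$. Dividing yields $E_n=H_{n+1}/(n+1)$, which matches the claimed expression up to the index on the harmonic number. Finally, since $H_m=\ln m+\gamma+O(1/m)$, we obtain $E_n=\Theta(\log n/n)$, and in particular the stated bound $E_n=\mathcal{O}(\log n/n)$.

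The main obstacle is the justification of the independence of the simplex direction $(X_i/S)$ from the magnitude $S$, since the entire clean factorisation rests on it; it is true but requires the Gamma-scale argument rather than a naive appeal to exchangeability (the maximum $X_M$ and the sum $S$ are emphatically \emph{not} independent as raw statistics, so the factorisation cannot be read off directly). If one prefers to avoid that step, the alternative is to integrate the exact tail $\mathbb{P}(\max_i W_i > t)=\sum_{k\ge 1}(-1)^{k+1}\binom{n+1}{k}(1-kt)_+^{n}$ over $t\in[0,1]$; each term integrates to $\tfrac{1}{k(n+1)}$, and the binomial identity $\sum_{k}(-1)^{k+1}\binom{n+1}{k}/k=H_{n+1}$ recovers the same $H_{n+1}/(n+1)$. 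This inclusion--exclusion route is more computational, and collapsing the alternating sum to the harmonic number is where the real work would sit.
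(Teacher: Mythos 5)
The paper does not actually prove this lemma --- it simply defers to the cited reference \cite{pinelis2019order} --- so your proposal is supplying an argument where the paper has none. Your argument is essentially correct and is the standard one: represent the uniform spacings as $(X_1/S,\ldots,X_{n+1}/S)$ with $X_i$ i.i.d.\ standard exponentials and $S$ their sum, use the Gamma-scale fact that the normalised vector is independent of $S$ to get $\mathbb{E}[\max_i W_i]=\mathbb{E}[X_M]/\mathbb{E}[S]$, and evaluate $\mathbb{E}[X_M]=H_{n+1}$ by R\'enyi's representation and $\mathbb{E}[S]=n+1$. You are also right to flag the index mismatch, and it is worth being precise about it: your computation gives $H_{n+1}/(n+1)$ because you append the final gap $1-\theta_n$ and maximise over all $n+1$ spacings, whereas the lemma as literally written maximises only over the $n$ spacings $W_1,\ldots,W_n$; the inclusion--exclusion route you sketch shows that the maximum over any $m\le n+1$ of the exchangeable spacings has mean $H_m/(n+1)$, so the literal statement should read $H_n/(n+1)$. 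Neither value equals the paper's claimed $H_{n-1}/(n+1)$, which appears to be an off-by-one (or off-by-two) error in the statement itself; since all three expressions are $\Theta(\log n/n)$, the asymptotic bound $E_n=\mathcal{O}(\log n/n)$ --- the only part used downstream in Theorem \ref{lem::PC_OverallConv} --- is unaffected. Your fallback via the exact tail $\mathbb{P}(\max_i W_i>t)=\sum_{k\ge1}(-1)^{k+1}\binom{n+1}{k}(1-kt)_+^{n}$ is also sound and has the advantage of handling the restricted maximum directly by replacing $\binom{n+1}{k}$ with $\binom{n}{k}$.
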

A proof can be found in \cite{pinelis2019order}.

With these results, we can state a main convergence result, which follows similarly to Theorem \ref{theorem::mainresult}.
\begin{theorem} \label{lem::PC_OverallConv}
    Suppose that for each outer loop, we have estimates of $G, V$ that hold throughout the loop. Furthermore, for each inner loop where we use $m_k$ iterations, let $\rho = \min_{m_1,\ldots,m_k} \left(\rho_{\varepsilon,m_k}\right)$. Then let $t_i \geq \frac{\alpha^2 \left(G^2 + V^2\right)}{\rho^2}, K_i = \lceil \log_\alpha \left( \frac{\varepsilon_0}{\varepsilon} \right) \rceil$.
    
    Let $i^*$ be the first outer loop in which the partition $\mathcal{C}$ is such that $||f^*_C(\theta) - f^*||_\pi \leq \varepsilon$. Then we obtain a $3\varepsilon$-accurate solution at the end of the outer loop $i^*$.
\end{theorem}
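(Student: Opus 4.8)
The plan is to mirror the three-step strategy used for Theorem \ref{theorem::mainresult}, replacing the role played there by ``$m_k \geq M$ basis functions'' with the refinement condition on the partition $\mathcal{C}$. At the end of outer loop $i^*$ the algorithm returns a function $x^F(\theta)$, and I want to control $||f(x^F(\theta),\theta) - f^*||_\pi$. The natural device is the triangle inequality against the partition-optimal value $f^*_C$:
\begin{equation*}
||f(x^F(\theta),\theta) - f^*||_\pi \leq ||f(x^F(\theta),\theta) - f^*_C(\theta)||_\pi + ||f^*_C(\theta) - f^*||_\pi,
\end{equation*}
so that the task splits into an \emph{optimisation error} (how well the restarted subgradient iterations solve the fixed-partition problem) and an \emph{approximation error} (how well the partition-optimal value approximates the true optimum). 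I would bound the first term by $2\varepsilon$ and the second by $\varepsilon$, giving the claimed $3\varepsilon$.

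First I would establish that $i^*$ is well defined, i.e. that the approximation error can be driven below $\varepsilon$ in finitely many refinements. This is exactly the content of the supporting lemmas: by Lemma \ref{lem::PC_OptimumConvergence} the partition-optimal value satisfies $||f(x(\theta),\theta) - f^*_n|| \leq \mathcal{O}(\mu)$, where $\mu$ is the maximal $\pi$-measure of an interval of $\mathcal{C}$, and by the harmonic-number estimate $E_n = \mathcal{O}(\log n / n)$ the expected value of $\mu$ tends to zero as the number of pieces grows. Since Algorithm \ref{alg::piececonst} adds a breakpoint at every refinement and $m_k$ is monotone, the approximation error is monotonically non-increasing in $i$ and converges to zero; hence a first outer loop $i^*$ with $||f^*_C(\theta) - f^*||_\pi \leq \varepsilon$ exists, and at that loop the second term above is bounded by $\varepsilon$ by definition.

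Next I would bound the optimisation error by invoking Lemma \ref{lem::PC_Fixed}. During outer loop $i^*$ the piecewise-constant representation is held fixed, so the hypotheses of that lemma are met, and with the prescribed $t_i \geq \alpha^2(G^2+V^2)/\rho^2$ and $K_i = \lceil \log_\alpha(\varepsilon_0/\varepsilon)\rceil$ stages the restarted scheme returns a $2\varepsilon$-accurate solution relative to $f^*_C$; that is, $||f(x^F(\theta),\theta) - f^*_C(\theta)||_\pi \leq 2\varepsilon$. The quantity $\rho = \min_{m_1,\dots,m_k}(\rho_{\varepsilon,m_k})$ is used precisely so that the step-size and stage-length choices remain valid uniformly over every partition fineness encountered; taking the minimum gives the most conservative value and guarantees that each restart stage still contracts the sublevel-set gap by the factor $\alpha$, exactly as in Theorem \ref{theorem::subgradientfixed}. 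Combining the two bounds through the triangle inequality yields the $3\varepsilon$ guarantee.

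I expect the main obstacle to be the interface between the refining partition and the fixed-partition analysis: strictly speaking the target $f^*_C$ can move as the partition is refined within or across the inner loops of outer loop $i^*$, so one must argue, as in point 1 of the strategy for Theorem \ref{theorem::mainresult}, that all of the finer partition optima lie within $\varepsilon$ of $f^*$ once we are past loop $i^*$ and therefore that the restart recursion ``sees'' an essentially stationary target. This is where the monotonicity of the refinement and the use of the minimal $\rho$ do the real work, and where care is needed to ensure the geometric decrease of the optimality gap is not spoiled by the changing optimum; the remaining steps are routine applications of the cited lemmas.
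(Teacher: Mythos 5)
Your decomposition is the same one the paper uses: a $2\varepsilon$ optimisation error against the partition-restricted optimum plus an $\varepsilon$ approximation error from $||f^*_C(\theta) - f^*||_\pi \leq \varepsilon$, giving $3\varepsilon$; the paper's proof is literally ``follows similarly to Theorem \ref{theorem::mainresult}''. So the overall route is right. The problem is the step you use to get the $2\varepsilon$: you assert that ``during outer loop $i^*$ the piecewise-constant representation is held fixed, so the hypotheses of \mbox{Lemma \ref{lem::PC_Fixed}} are met,'' and then in your final paragraph you concede that the partition in fact moves within and across the inner loops of $i^*$. These two statements are in tension, and the first one is the false one: in the algorithm the partition is refined as $k$ advances inside an outer loop, so Lemma \ref{lem::PC_Fixed} (whose hypothesis is precisely that the representation is frozen for the whole outer loop) cannot be invoked as a black box. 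Flagging the moving target as ``the main obstacle'' and saying ``care is needed'' does not close it.

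The paper closes it by not using the fixed-level result directly. Instead it reruns the induction of Theorem \ref{theorem::subgradientfixed} with a stage-dependent target, proving $\mathbb{E}_\pi\left(f(x_k(\theta),\theta) - f^*_k\right) \leq \varepsilon_k + \varepsilon$ where $f^*_k$ is the optimum for the partition in force at stage $k$. The only place the changing level bites is the bound
\begin{equation*}
\frac{||x_{k-1}(\theta) - x^{\dagger}_{k-1,\varepsilon}(\theta)||_\pi^2}{2\eta_k t} \leq \frac{\rho_{n+1}}{\rho_n}\,\varepsilon_k,
\end{equation*}
and choosing $\rho$ as the minimum over all levels encountered forces the extra factor to be at most one, so the geometric contraction $\varepsilon_k = \varepsilon_0/\alpha^k$ survives. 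You correctly guessed that the minimal $\rho$ and the monotonicity of refinement ``do the real work,'' but the work itself --- exhibiting the stage-$k$ inductive inequality with the moving $f^*_k$ and checking that the $\rho$-ratio is absorbed --- is the substance of the proof and is missing from your write-up. Everything else (existence of $i^*$ via Lemma \ref{lem::PC_OptimumConvergence} and the $\mathcal{O}(\log n / n)$ estimate on the largest piece, and the final triangle inequality) matches the paper.
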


\section{Experiments}
In this section we provide some numerical results of the RSGUQ method. We begin by evaluating the method on a simple unconstrained problem, before comparing some different methods for a min-cut graph problem. We will see that for each problem we consider, that we get simple bounds for $B_{\varepsilon,m}$ that make selecting the algorithms parameters easy.
\subsection{A simple constrained problem}
In this subsection, we minimise a simple quadratic in two variables defined as follows:

\begin{equation}
f(x(\theta), y(\theta))=\begin{cases}
          \frac{\mu}{4}\left(x(\theta) - x^*(\theta)\right)^2 + \frac{L}{2}\left(y(\theta) - y^*(\theta)\right)^2  \quad &\text{if} \, x(\theta) > x^*(\theta) \text{ and } y(\theta) > y^*(\theta) \\
          \frac{\mu}{2}\left(x(\theta) - x^*(\theta)\right)^2 + \frac{L}{2}\left(y(\theta) - y^*(\theta)\right)^2  \quad &\text{if} \, x(\theta) < x^*(\theta) \text{ and } y(\theta) > y^*(\theta) \\
          \frac{\mu}{4}\left(x(\theta) - x^*(\theta)\right)^2 + \frac{L}{4}\left(y(\theta) - y^*(\theta)\right)^2  \quad &\text{if} \, x(\theta) > x^*(\theta) \text{ and } y(\theta) < y^*(\theta) \\
          \frac{\mu}{2}\left(x(\theta) - x^*(\theta)\right)^2 + \frac{L}{4}\left(y(\theta) - y^*(\theta)\right)^2  \quad &\text{if} \, x(\theta) < x^*(\theta) \text{ and } y(\theta) < y^*(\theta), \\
     \end{cases} \label{eqn::QuadraticRSG}
\end{equation}
where we have:
\begin{equation*}
    x^*(\theta) = y^*(\theta) = |(4/5 + 1/4 \exp(\sin(\theta)) - \cosh(\sin(\theta)^2)|(1+\sin(2\theta)).
\end{equation*}
This optimum function also appears in \cite{crepey2020uncertainty} and \cite{mcmeel2021uncertainty}, and is chosen for the fact that its basis representation decays slowly. We also impose the condition on our solution $w(\theta) = (x(\theta),y(\theta))$ that $||w||_\pi \leq 1.5$. As explained before, this is equivalent to an $L_2$ constraint on the vector of coefficients, so can be projected easily. Furthermore, benchmark numerical integration tells us that $||x^*(\theta)||_\pi = ||y^*(\theta)||_\pi < 0.52$, so the solution to this problem will be the correct one.

We note the theoretical epoch sizes as derived in \ref{theorem::mainresult} can be very large: in general we know that we have $G^2 \leq L^2$ as $f$ is Lipschitz continuous, but cannot improve upon this in general without special structure. In these experiments we show that epochs of much smaller size still ensure convergence.

Rather than trying to identify the $B_{\varepsilon}$ coefficients for this function, we note that we can put the following simple bound on the values of $f$, where we write $(x(\theta), y(\theta)) = w(\theta)$ to keep similarity with Definition \ref{defn::localerrorbound}:
\begin{equation*}
    ||w(\theta) - w^*(\theta)||_\pi \leq \sqrt{\frac{4}{L}} ||f(w(\theta),\theta) - f(w^*(\theta),\theta)||_\pi
\end{equation*}
where we note that $f(w^*(\theta),\theta) = 0$ $\forall$ $\theta$. Therefore we have the bound $B_{\varepsilon,k} \leq \frac{2\varepsilon}{\sqrt{L}}$. Due to this bound, we see that Corollary \ref{cor::NewLinear} will actually give a linear rate of convergence.


\begin{figure}
		\includegraphics[width=0.7\linewidth]{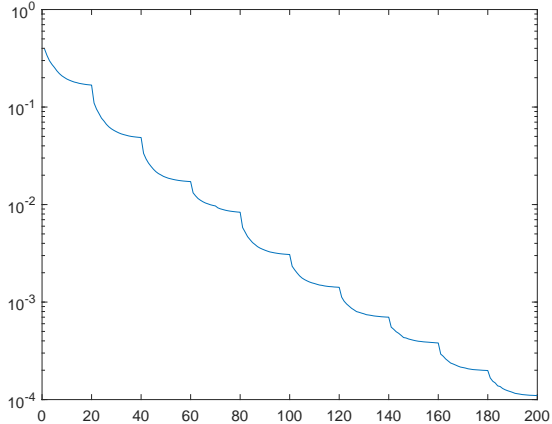}
		\caption{A plot of function error against number of calls to Algorithm \ref{alg::subroutineRSG} in minimising Equation \eqref{eqn::QuadraticRSG}.} \label{fig::QuadError}
\end{figure}

With this in mind, we set $t_i = 50, K_i = 20$, and we terminate after $i = 10$ loops. We have also set $\mu = 1, L = 50$. In Figure \ref{fig::QuadError} we see a plot of the error in the function value. We note that the $x$-axis is numbered by the amount of times the SG inner subroutine from \ref{alg::subroutineRSG} is run. With that in mind, we see that each cusp in the plot represents one inner loop in the RSG routine. A linear rate of convergence, as expected, is clearly observed.

With this proof of concept, we now move on to dealing with a more interesting application: solving a discrete problem (min-cut) via its convex relaxation.

\subsection{Min-Cut Problem}
In this subsection we deal with the problem of solving a min-cut problem. As stated previously, we will use the tools of submodular optimisation to solve this problem. We first detail the specific example we wish to solve:
\begin{example} \label{exmp::IntroConvex}
		Consider the minimum $s,t$ cut problem from Figure $1$ with $\theta \geq 0$, where we have a graph $G = (V,E)$ with a source $s$ and sink $t$ that we require to be in different subsets. The problem statement asks us to find sets $S,V\setminus S$, with $s \in S$ and $t \in V \setminus S$ such that:
		\begin{equation}
		    C(S) = \sum_{i \in S, j \in V \setminus S, (i,j) \in E} w_{ij}
		\end{equation}
		is minimised, where $w_{ij}$ is the weight of the edge between $i, j$.
		
		It can be shown that this problem is a submodular minimisation problem, and further that it can be minimised by casting it as a convex minimisation problem via the Lovasz Extension\cite{fujishige2005submodular}, a common convex relaxation used to minimise submodular functions. More details about the Lovasz Extension can be found in \cite{fujishige2005submodular}. In our case, it can be shown that the Lovasz Extension is equal to:
		\begin{equation} \label{eqn::Lovasz}
		    f(x_1,x_2) = 2|x_1-x_2| + \theta x_1 + 3|1-x2|
		\end{equation}
		with $x_1, x_2 \in [0,1]$. We see that the set of minima depend on the value of $\theta$, and in particular, that set $M(\theta)$ can be shown to be:
		\begin{equation}
		    M(\theta) = \{(0,1), \theta \geq 2\}, \quad \{(1,1), \theta \leq 2\}.
		\end{equation}
	\end{example}

    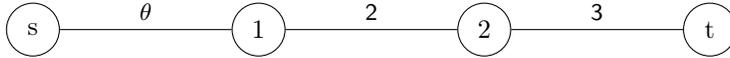
\begin{figure} \label{fig::stCutIntro}	
    	\begin{tikzpicture}[
          vertex/.style = {shape=circle,draw,minimum size=2em},
          edge/.style = {->,-Latex},
          ]
          \node[vertex] (s) at (2,0) {s};
          \node[vertex] (1) at (5,0) {1};
          \node[vertex] (2) at (8,0) {2};
          \node[vertex] (t) at (11,0) {t};
          \path[every node/.style={font=\sffamily\small}]
            (s) edge  node[pos=0.5,above] {$\theta$} (1)
            (1) edge node[pos=0.5,above] {2} (2)
            (2) edge node[pos=0.5,above] {3} (t);
        \end{tikzpicture}
        \caption{A min $s-t$ cut problem found in Example \ref{exmp::IntroConvex}}
    \end{figure}

We note that the problem set-up will require us to impose constraints of the form $0 \leq x(\theta) \leq 1$ $\forall$ $\theta$, which are highly non-trivial for most common orthogonal series. With this in mind, as discussed in Section \ref{miniSec::PiecewiseConst}, we will divide the one-dimensional range of $\theta$ into an increasingly finer piecewise constant function. Note that in this case, the projection is trivial as each piece can be projected to $[0,1]$. We expect \textit{a priori} that the optimum $x^*(\theta)$, will be piecewise constant, so this method is suited to our problem.
For our experiments we use the : $K = 20$, $t = 50$, $\alpha = 1.2$, and an initial step size of $0.01$. We perform a total of $10$ outer loops. At inner loop $j$, we use $\left(j+10\right)^{0.8} + 10$ basis functions.


In Figure \ref{fig::SubPiecewise} we present the results. The $x$-axis is indexed by the number of calls of Algorithm \ref{alg::subroutineRSG}, and the $y$-axis the error $||f(x(\theta),\theta) - f^*||_\pi^2$. We see that linear convergence is attained overall, with characteristic cusps in each outer loop as the step size is being reduced. We also note that any apparent slowing down in convergence rate can be explained by not having sufficient basis functions.

\begin{figure}
		\includegraphics[width=0.7\linewidth]{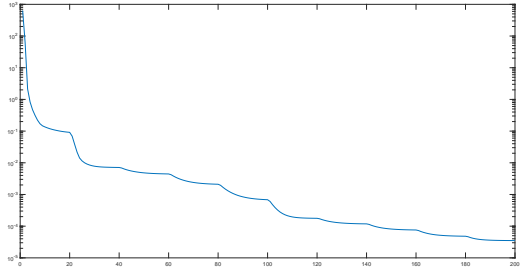}
		\caption{A plot of function error against number of calls to Algorithm \ref{alg::subroutineRSG} in minimising Equation \eqref{eqn::Lovasz}.} \label{fig::SubPiecewise}
\end{figure}

\section{Conclusion}
In this paper we introduced a subgradient method equivalent to the stochastic uncertainty quantification method given by \cite{crepey2020uncertainty}, and the gradient descent method given by \cite{mcmeel2021uncertainty}. The latter work only considers the case where $f$ is strongly convex, and here we relax to the case where $f$ is non-strongly convex and nonsmooth. We motivated our method by considering that to estimate the statistics of $x^*(\theta)$ naively, we would need to perform subgradient descent an exponential number of times.
	
We showed that each method converges with rate $1/\varepsilon^2$ to a solution, and as the number of basis functions grows, we eventually converge to a $\varepsilon$ neighbourhood of the true solution. Numerical evidence supported the above claims.
	
We then considered a subclass of problems called \textit{polyhedral convex optimisation}, which includes piecewise linear functions. We showed that a linear rate of convergence is obtained in this case when the number of basis functions is high enough. We showed that submodular minimisation is a polyhedral convex optimisation problem, and evaluated the performance of our method on a min $s-t$ cut problem.

\newpage
\bibliographystyle{plain}
\bibliography{UQVersionCurrent}

\newpage
\appendix
\section{Fixed Level Convergence Proofs}
In this section, we give the proofs for Lemma \ref{lem::constStepSG} and Theorem \ref{theorem::subgradientfixed}.
\subsection{Proof for Lemma \ref{lem::constStepSG}}
We begin by writing the statement of the Lemma again:
\begin{lemma}
    Suppose we run the subroutine Algorithm \ref{algo_sub}, but we only have noisy subgradients as defined above. Using a constant step size $\eta$ for $T$ iterations, we have:
    \begin{equation*}
        \mathbb{E}\left(\mathbb{E}_\pi\left(f(\tilde{x}_T(\theta),\theta) - f(x^*(\theta),\theta) | \mathcal{F}_k\right)\right) \leq \frac{\left(G^2 + V^2\right) \eta}{2} + \frac{||x_1(\theta) - x(\theta)||_\pi^2}{2\eta T}
    \end{equation*}
    where $\tilde{x}_T = \frac{1}{T}\sum_{i=1}^T x_i$, the outer expectation is with respect to the filtration $\mathcal{F}_k$, and $x^*(\theta)$ is some truncated optimum.
\end{lemma}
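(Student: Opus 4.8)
**

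The plan is to prove this via the standard subgradient descent analysis, adapted to the $L^2_\pi$ setting with noisy subgradients. The core identity is the one-step expansion of the squared distance to the optimum. I would begin by fixing a level-$m$ truncated optimum $x^*(\theta)$ and, for a single iteration, expanding $\|x^{t}(\theta) - x^*(\theta)\|_\pi^2$. Since the update is $x^{t} = \Pi(x^{t-1} - \eta g'_{m_k})$ and the projection $\Pi$ is onto a convex set (nonexpansive in the $\pi$-norm), the projection can only decrease the distance, so I would write
\begin{equation*}
\|x^{t}(\theta) - x^*(\theta)\|_\pi^2 \leq \|x^{t-1}(\theta) - x^*(\theta)\|_\pi^2 - 2\eta \langle g'_{m_k}, x^{t-1} - x^* \rangle_\pi + \eta^2 \|g'_{m_k}\|_\pi^2.
\end{equation*}

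Next I would take conditional expectation with respect to the filtration $\mathcal{F}_{k}$. Here the two assumptions enter: unbiasedness gives $\mathbb{E}(g'_x \mid \mathcal{F}_k)$ is a genuine subgradient, so the inner-product term is controlled by convexity via the subgradient inequality of Definition \ref{defn::subgradient}, yielding $\langle \mathbb{E}(g'_{m_k}), x^{t-1} - x^* \rangle_\pi \geq f(x^{t-1}(\theta),\theta) - f(x^*(\theta),\theta)$; the variance bound gives $\mathbb{E}(\|g'_{m_k}\|_\pi^2) \leq G^2 + V^2$. Rearranging produces a per-step bound of the form
\begin{equation*}
2\eta\,\mathbb{E}\big(f(x^{t-1}) - f(x^*)\big) \leq \mathbb{E}\|x^{t-1} - x^*\|_\pi^2 - \mathbb{E}\|x^{t} - x^*\|_\pi^2 + \eta^2 (G^2 + V^2).
\end{equation*}

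I would then sum this telescoping inequality over $t = 1, \ldots, T$. The distance terms collapse to $\|x_1(\theta) - x^*(\theta)\|_\pi^2 - \mathbb{E}\|x^{T+1} - x^*\|_\pi^2 \leq \|x_1(\theta) - x^*(\theta)\|_\pi^2$, and the variance term accumulates to $T\eta^2(G^2+V^2)$. Dividing through by $2\eta T$ and invoking convexity once more (Jensen's inequality) to pass from the average of $f(x^{t})$ to $f$ evaluated at the averaged iterate $\tilde{x}_T = \frac{1}{T}\sum_i x_i$ gives precisely the stated bound, with the two terms $\frac{(G^2+V^2)\eta}{2}$ and $\frac{\|x_1 - x\|_\pi^2}{2\eta T}$.

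The main subtlety, rather than obstacle, is the careful handling of the two layers of expectation and the fact that all inequalities must hold in the $\pi$-norm on $L^2_\pi$ rather than pointwise. Specifically, the subgradient inequality in Definition \ref{defn::subgradient} holds for each fixed $\theta$, so I must integrate it against $\pi$ to obtain the $\pi$-norm statement, and I must verify that the truncated subgradient genuinely satisfies the inequality for iterates confined to $L^2_{m,\pi}$ (noted in the text after the truncated-subgradient definition). The interchange of the $\theta$-expectation $\mathbb{E}_\pi$ with the noise expectation, and the nonexpansiveness of $\Pi$ in the $\pi$-norm, are the two structural facts that make the finite-dimensional argument carry over verbatim; everything else is the routine telescoping computation.
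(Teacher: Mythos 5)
Your proposal is correct and follows essentially the same route as the paper's proof: a one-step expansion of the squared $\pi$-distance to a truncated optimum, nonexpansiveness of the projection, unbiasedness and the variance bound to control the cross and quadratic terms, telescoping over $T$ steps, and a final application of Jensen's inequality to pass to the averaged iterate. Your accounting of the accumulated variance term as $T\eta^2(G^2+V^2)$ is in fact the careful version of what the paper writes, and your remarks on integrating the pointwise subgradient inequality against $\pi$ and on the truncated subgradient being valid on $L^2_{m,\pi}$ match the paper's treatment.
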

Before we prove this, we first prove a couple of auxiliary results. First, we need a Lemma of our own on the subgradient that is a simple consequence of its definition:
\begin{lemma}
    Let $g_m(\theta)$ be a truncated subgradient. Then for all $x(\theta), y(\theta) \in L^2_{\pi,m}$ we have:
    \begin{equation*}
        \mathbb{E}_\pi\left(f(x(\theta),\theta) - f(y(\theta),\theta)\right) \geq \langle g_x(\theta), x(\theta) - y(\theta) \rangle_\pi
    \end{equation*}
\end{lemma}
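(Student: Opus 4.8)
The plan is to reduce the claim to the defining pointwise inequality \eqref{eqn::SubgradDef} for a truncated subgradient and then integrate against $\pi$. The only substantive content is that the $\pi$-inner product $\langle \cdot,\cdot\rangle_\pi$ is, by definition, the integral over $\theta$ of the pointwise real products $\langle\cdot,\cdot\rangle_2$; hence an inequality that holds separately for each fixed $\theta$ survives integration against the (nonnegative) probability measure $\pi$. No estimate beyond the definitions is needed, which is why this is flagged as a simple consequence.

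Concretely, I would first fix $\theta$ and invoke the remark following the definition of the truncated subgradient: a level $m$ truncated subgradient satisfies the subgradient inequality \eqref{eqn::SubgradDef} for every pair $x(\theta), y(\theta) \in L^2_{\pi,m}$. Applying that inequality with the subgradient attached to the base point appearing on the right-hand side yields, for each fixed $\theta$, a bound between ordinary real numbers of the shape
\[
    f(x(\theta),\theta) - f(y(\theta),\theta) \geq \langle g_x(\theta), x(\theta)-y(\theta)\rangle_2 .
\]
I would then integrate both sides with respect to $\pi$. Since $\pi$ is a probability measure, integration preserves the inequality; the left-hand side becomes $\mathbb{E}_\pi\!\left(f(x(\theta),\theta)-f(y(\theta),\theta)\right)$, while the right-hand side becomes $\int \langle g_x(\theta), x(\theta)-y(\theta)\rangle_2\,\pi(d\theta) = \langle g_x(\theta), x(\theta)-y(\theta)\rangle_\pi$ by the definition of the $\pi$-inner product. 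This is exactly the stated conclusion.

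The argument is routine, so the points requiring care are bookkeeping rather than genuine obstacles. The first is the direction of the inequality: \eqref{eqn::SubgradDef} must be applied with the roles of $x$ and $y$ oriented so that the signs of $x-y$ and $f(x)-f(y)$ match the statement (equivalently, one reads the definition with the two points interchanged and the subgradient taken at the appropriate base point). The second is integrability: because $x(\cdot),y(\cdot)\in L^2_\pi$ and $f$ is $L$-Lipschitz in its first argument, both $f(x(\theta),\theta)-f(y(\theta),\theta)$ and the pointwise inner product are $\pi$-integrable, so each integral above is well defined and the passage from the pointwise inequality to its integrated form is justified.
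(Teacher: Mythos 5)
The paper gives no written proof of this lemma---it is introduced only as ``a simple consequence of its definition''---and your overall strategy (establish the inequality pointwise in $\theta$ from \eqref{eqn::SubgradDef}, then integrate against the probability measure $\pi$ so that $\langle\cdot,\cdot\rangle_2$ becomes $\langle\cdot,\cdot\rangle_\pi$) is surely the intended one; your integrability remark is also fine. The problem is the pointwise inequality you propose to integrate. Definition \ref{defn::subgradient}, applied at base point $x(\theta)$ with test point $y(\theta)$, gives
\begin{equation*}
f(y(\theta),\theta) - f(x(\theta),\theta) \;\geq\; \langle g_x(\theta),\, y(\theta)-x(\theta)\rangle_2 .
\end{equation*}
Negating both sides reverses the inequality, so what actually follows is
\begin{equation*}
f(x(\theta),\theta) - f(y(\theta),\theta) \;\leq\; \langle g_x(\theta),\, x(\theta)-y(\theta)\rangle_2 ,
\end{equation*}
not the ``$\geq$'' you display. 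Your suggested bookkeeping fix---reading the definition with the two points interchanged---does not repair this: interchanging the points attaches the subgradient to $y$, producing $f(x)-f(y)\geq\langle g_y, x-y\rangle_2$, which has the wrong base point. No orientation of \eqref{eqn::SubgradDef} yields $f(x)-f(y)\geq\langle g_x, x-y\rangle_2$ with $g_x$ a subgradient at $x$; for $f(x)=x^2$, $x=1$, $y=0$, $g_x=2$ it would read $1\geq 2$. So the displayed pointwise bound is false as written, and integrating it proves nothing.

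The honest diagnosis is that the lemma as printed appears to have its inequality reversed: in the proof of Lemma \ref{lem::constStepSG}, where this result is invoked, the bound actually used is $\langle g_{x_k}(\theta),\, x_k(\theta)-x^*(\theta)\rangle_\pi \geq \mathbb{E}_\pi\left(f(x_k(\theta),\theta)-f(x^*(\theta),\theta)\right)$, i.e.\ the ``$\leq$'' version above with $x=x_k$ and $y=x^*$. Your write-up, by making a sign slip that exactly mirrors the apparent typo in the statement, ends up asserting the unprovable printed version rather than deriving the version that is both true and actually needed. Rewriting the argument with the inequality in the direction the definition genuinely gives (and flagging the discrepancy with the statement) would make it correct; as it stands, the central step is a genuine gap.
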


We now also adapt Lemma $1$ of \cite{yang2018rsg}: 
\begin{lemma} \label{lem::wTechLem}
    For any $\varepsilon > 0$ such that $\mathcal{L}_\varepsilon \neq 0$ and any $w \in \Omega$, we have:
    \begin{equation*}
        ||w(\theta) - w^\dagger_{\varepsilon}(\theta)||_\pi \leq \frac{1}{\rho_{\varepsilon}} \mathbb{E}_\pi\left(f(w(\theta),\theta) - f(w^{\dagger}_{\varepsilon}(\theta))\right)
    \end{equation*}
\end{lemma}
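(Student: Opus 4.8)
The plan is to transplant the proof of Lemma 1 of \cite{yang2018rsg} into the Hilbert space $L^2_\pi$ equipped with the $\pi$-weighted inner product $\langle\cdot,\cdot\rangle_\pi$, using the auxiliary subgradient Lemma stated just above for the convexity step. First I would dispose of the trivial case: if $w \in \mathcal{S}_\varepsilon$ then the projection $w^\dagger_\varepsilon = w$, both sides of the inequality vanish, and there is nothing to prove. So assume $w \notin \mathcal{S}_\varepsilon$, which gives $\zeta \defeq w(\theta) - w^\dagger_\varepsilon(\theta) \neq 0$. Because $\mathcal{S}_\varepsilon$ is closed and convex, the projection $w^\dagger_\varepsilon$ must then sit on the boundary of the sublevel set; using continuity of $f$ I would argue $w^\dagger_\varepsilon \in \mathcal{L}_\varepsilon$, i.e. $f(w^\dagger_\varepsilon(\theta),\theta) = f^*(\theta) + \varepsilon$ for $\pi$-a.e.\ $\theta$, so that $\rho_\varepsilon$ is applicable at $w^\dagger_\varepsilon$.

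Next I would extract the variational inequality characterising the projection. Since $w^\dagger_\varepsilon = \arg\min_{u \in \mathcal{S}_\varepsilon}\|u(\theta)-w(\theta)\|_\pi^2$ onto a convex set, we have $\langle w(\theta)-w^\dagger_\varepsilon(\theta),\, u(\theta)-w^\dagger_\varepsilon(\theta)\rangle_\pi \leq 0$ for all $u \in \mathcal{S}_\varepsilon$, so $\zeta$ lies in the normal cone to $\mathcal{S}_\varepsilon$ at $w^\dagger_\varepsilon$. The crux of the argument is then to rewrite this normal cone in terms of subgradients: invoking the convex-analytic identity that at a boundary point the normal cone to the sublevel set equals $\mathbb{R}_+\,\partial f(w^\dagger_\varepsilon) + \mathcal{N}(w^\dagger_\varepsilon)$, I would write $\zeta = s\,(g(\theta)+v(\theta))$ with $s \geq 0$, $g \in \partial f(w^\dagger_\varepsilon)$ and $v \in \mathcal{N}(w^\dagger_\varepsilon)$. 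A one-line argument rules out $s=0$: otherwise $\zeta \in \mathcal{N}(w^\dagger_\varepsilon)$ would force $\|\zeta\|_\pi^2 = \langle \zeta, w-w^\dagger_\varepsilon\rangle_\pi \leq 0$, contradicting $\zeta \neq 0$; hence $s>0$.

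Finally I would combine the pieces. Since $w^\dagger_\varepsilon \in \mathcal{L}_\varepsilon$, the definition of the first-order residual and of $\rho_\varepsilon$ give $\|g(\theta)+v(\theta)\|_\pi \geq \rho_\varepsilon$, so $\|\zeta\|_\pi = s\,\|g+v\|_\pi \geq s\,\rho_\varepsilon$, i.e.\ $s \leq \|\zeta\|_\pi/\rho_\varepsilon$. Expanding the square, $\|\zeta\|_\pi^2 = s\langle \zeta, g\rangle_\pi + s\langle \zeta, v\rangle_\pi$, the normal-cone term obeys $\langle \zeta, v\rangle_\pi = \langle w-w^\dagger_\varepsilon, v\rangle_\pi \leq 0$ (as $w \in \Omega$ and $v \in \mathcal{N}(w^\dagger_\varepsilon)$), and the subgradient inequality of the preceding Lemma applied at $w^\dagger_\varepsilon$ gives $\langle \zeta, g\rangle_\pi \leq \mathbb{E}_\pi\!\left(f(w(\theta),\theta) - f(w^\dagger_\varepsilon(\theta),\theta)\right)$. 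Chaining these yields $\|\zeta\|_\pi^2 \leq s\,\mathbb{E}_\pi(\cdots) \leq \tfrac{\|\zeta\|_\pi}{\rho_\varepsilon}\,\mathbb{E}_\pi(\cdots)$, and dividing through by $\|\zeta\|_\pi > 0$ produces exactly the claimed bound.

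The main obstacle I anticipate is the normal-cone-to-sublevel-set characterisation used in the third step, which is routine in finite dimensions but in the infinite-dimensional space $L^2_\pi$ requires a constraint qualification; I would justify it via the fact that $\mathcal{S}_\varepsilon$ has nonempty interior for $\varepsilon>0$ (a Slater-type condition), and note that the finite-truncation version in $L^2_{m,\pi}$ follows identically since that space is finite-dimensional. A secondary technical point, worth a sentence, is confirming that the projection genuinely lands on the level set $\mathcal{L}_\varepsilon$ rather than merely on $\partial\mathcal{S}_\varepsilon$, which I would secure from continuity of $\theta \mapsto f(w^\dagger_\varepsilon(\theta),\theta)$ and the established continuity of $w^\dagger_\varepsilon$ in $\theta$.
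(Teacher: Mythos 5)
Your proposal is correct and follows essentially the same route as the paper, which simply defers to Lemma 1 of the cited RSG reference and substitutes the $\mathbb{E}_\pi$ subgradient inequality $\zeta\,\mathbb{E}_\pi\left(f(w(\theta),\theta) - f(w^{\dagger}_{\varepsilon}(\theta))\right) \geq \zeta \langle w(\theta) - w^\dagger_{\varepsilon}(\theta), g(\theta) \rangle_\pi$ at the convexity step; your normal-cone decomposition $\zeta = s(g+v)$ is just the KKT-multiplier argument of that reference written out in full. Your added remarks on the Slater-type constraint qualification needed for the sublevel-set normal-cone identity in $L^2_\pi$, and on the projection landing on $\mathcal{L}_\varepsilon$, address genuine details that the paper's proof leaves implicit.
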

\begin{proof}
    The proof proceeds similarly to \cite{yang2018rsg}, except instead of the Equation after Equation $(8)$ there, we use our definition of convexity and the subgradient, thus giving:
    \begin{equation*}
        \zeta\mathbb{E}_\pi\left(f(w(\theta),\theta) - f(w^{\dagger}_{\varepsilon}(\theta))\right) \geq \zeta \langle w(\theta) - w^\dagger_{\varepsilon}(\theta), g(\theta) \rangle_\pi
    \end{equation*}
    and the remainder of the proof follows as in \cite{yang2018rsg}.
\end{proof}
Now that we have done this, we proceed with the proof of Lemma \ref{lem::constStepSG}.
\begin{proof}[Proof of Lemma \ref{lem::constStepSG}]
At iteration $k$, we have the iterate $x^k(\theta)$, the step size $\alpha_k$, and the vector of basis coefficients of the subgradient as $g_k(\theta)$. We label an optimum as $x^*(\theta)$. We define the step before projection as:
\begin{equation*}
    z^{k+1}(\theta) = x^k(\theta) - \alpha_k \tilde{g}_k(\theta)
\end{equation*}
Knowing this, we can write:
\begin{align*}
    &\mathbb{E}\left( ||z^{k+1}(\theta) - x^*(\theta)||_\pi^2 | \mathcal{F}_k\right) = \mathbb{E}\left(||x^k(\theta) - \alpha_k \tilde{g}_k - x^*(\theta)||_\pi^2 | \mathcal{F}_k\right) \nonumber \\
    &= \mathbb{E}\left(||x^k(\theta) - x^*(\theta)||_\pi^2 | \mathcal{F}_k\right) -2\alpha_k\mathbb{E}\left(\langle \tilde{g}_k, x^k(\theta)-x^*(\theta) \rangle_2 | \mathcal{F}_k\right) + \alpha_k^2\mathbb{E}\left(||\tilde{g}_k||_\pi^2 | \mathcal{F}_k \right) \nonumber \\
    &= \mathbb{E}\left(||x^k(\theta) - x^*(\theta)||_\pi^2 | \mathcal{F}_k\right) -2\alpha_k \langle \mathbb{E}(\tilde{g}_k | \mathcal{F}_k), x^k(\theta)-x^*(\theta) \rangle_2 + \alpha_k^2\mathbb{E}\left(||\tilde{g}_k||_\pi^2 | \mathcal{F}_k \right) \nonumber \\
    &\leq \mathbb{E}\left(||x^k(\theta) - x^*(\theta)||_\pi^2 | \mathcal{F}_k\right) -2\alpha_k \mathbb{E}\left(\mathbb{E}_\pi\left(f(x_k(\theta),\theta) - f(x^*(\theta),\theta) | \mathcal{F}_k\right) \right) + \alpha_k^2\mathbb{E}\left(||\tilde{g}_k||_\pi^2 | \mathcal{F}_k \right),
\end{align*}
where in the last line, the fact that $\tilde{g}_k$ is an unbiased estimate of a subgradient. By the properties of projections, if $x^{k+1}(\theta) = \mathcal{P}(x^k(\theta))$, we have
\begin{equation*}
\mathbb{E}\left( ||x^{k+1}(\theta) - x^*(\theta)||_\pi^2 | \mathcal{F}_k\right) \leq \mathbb{E}\left( ||z^{k+1}(\theta) - x^*(\theta)||_\pi^2 | \mathcal{F}_k\right)
\end{equation*}
Then combining that with the previous inequality, we can say:
\begin{align*}
    \mathbb{E}\left( ||x^{k+1}(\theta) - x^*(\theta)||_\pi^2 |\mathcal{F}_k \right) &\leq \mathbb{E}\left(||x^k(\theta)
    - x^*(\theta)||_\pi^2 | \mathcal{F}_k\right) \\ 
    &-2\alpha_k \mathbb{E}\left(\mathbb{E}_\pi\left(f(x_k(\theta),\theta) - f(x^*(\theta),\theta) | \mathcal{F}_k\right) \right) \\
    &+ \alpha_k^2 \left( V^2 + G^2 \right),
\end{align*}
Where we have also used $\mathbb{E}||g_k^2||_\pi^2 \leq G^2$. We can now iterate this inequality through $k$ and get:
\begin{align*}
    \mathbb{E}\left( ||x^{k+1}(\theta) - x^*(\theta)||_\pi^2 |\mathcal{F}_k \right) &\leq \mathbb{E}||x^1(\theta) - x^*(\theta)||_\pi^2 \\
    &-\sum_{i=1}^k 2\alpha_i \mathbb{E}\left(\mathbb{E}_\pi\left(f(x_i(\theta),\theta) - f(x^*(\theta),\theta) | \mathcal{F}_i\right) \right) \\
    &+ \sum_{i=1}^k \alpha_i^2 \left(G^2 + V^2\right)
\end{align*}
By dropping positive terms, setting $k = T$, and setting $\alpha_i = \eta$, $\forall i$, we find:
\begin{equation*}
     \frac{1}{T}\sum_{i=1}^T\mathbb{E}\left(\mathbb{E}_\pi\left(f(x_i(\theta),\theta) - f(x^*(\theta),\theta) | \mathcal{F}_i\right) \right) \leq \frac{||x^1(\theta) - x^*(\theta)||_\pi^2 + \eta^2\left(G^2 + V^2\right)}{2\eta T}
\end{equation*}
from which we get the result using Jensen's inequality.
\end{proof}

\subsection{Proof for Theorem \ref{theorem::subgradientfixed}}

\begin{proof}
    We use an inductive framework similar to \cite{yang2018rsg}. We note that for the duration of this proof, we remove the $m$ subscript on relevant quantities for clarity. Firstly we recall the point $x_{k,\varepsilon}^\dagger$ as being the closest point to $x_k$ in the $\varepsilon$ sublevel set. Then let $\varepsilon_k = \frac{\varepsilon_0}{\alpha^k}$, which gives $\eta_k = \frac{\varepsilon_k}{\left(G^2 + V^2\right)}$. By induction, we'll show:
    \begin{equation}
        \mathbb{E}_\pi\left(f(x_k(\theta),\theta) - f^*_k\right) \leq \varepsilon_k + \varepsilon \label{equation::ToInduct}
    \end{equation}
    for $k = 0,1,\ldots,K$, which gives our result for $k = K$. The base case of $k = 0$ is clear, so we suppose it holds for $k-1$. we can apply Lemma \ref{lem::constStepSG} to the $k$th stage and find:
    \begin{equation}
        \mathbb{E}_\pi\left(f(x_k(\theta),\theta) - f(x^{\dagger}_{k-1, \varepsilon}(\theta),\theta)\right) \leq \frac{\left(G^2 + V^2\right) \eta_k}{2} + \frac{||x_{k-1}(\theta) - x^{\dagger}_{k-1, \varepsilon}(\theta)||_\pi^2}{2\eta_k t} \label{equation::THMPrelim}
    \end{equation}
    After this we separate into cases: firstly where $x_{k-1} \in \mathcal{S}_\varepsilon$. In this case $x^{\dagger}_{k-1, \varepsilon} = x_{k-1}$, and then from the previous Equation we see:
    \begin{equation*}
        \mathbb{E}_\pi\left(f(x_k(\theta),\theta) - f(x^{\dagger}_{k-1, \varepsilon}(\theta),\theta)\right) \leq \frac{\left(G^2 + V^2\right) \eta_k}{2} = \frac{\varepsilon_k}{2} 
    \end{equation*}
    From this we see that:
    \begin{equation*}
        \mathbb{E}_\pi\left(f(x_k(\theta),\theta) - f^*\right) \leq \mathbb{E}_\pi\left(f(x_k(\theta),\theta) - f(x^{\dagger}_{k-1, \varepsilon}(\theta),\theta)\right) + \mathbb{E}_\pi\left(f(x^{\dagger}_{k-1, \varepsilon}(\theta),\theta) - f^*\right) \leq \frac{\varepsilon_k}{2} + \varepsilon
    \end{equation*}
    We now consider the second case, where $x_{k-1} \in \mathcal{S}_\varepsilon$, which implies that $\mathbb{E}||f(x_k(\theta),\theta) - f(x^{\dagger}_{k-1, \varepsilon}(\theta),\theta)||_\pi = \varepsilon$. From Lemma \ref{lem::wTechLem}, we have:
    \begin{align*}
        &||x_{k-1}(\theta) - x^\dagger_{k-1,\varepsilon}(\theta)||_\pi \nonumber \\
        &\leq \frac{1}{\rho_{\varepsilon}} \mathbb{E}_\pi\left(f(x_{k-1}(\theta),\theta) - f(x^\dagger_{k-1,\varepsilon}(\theta))\right) \nonumber \\
        &\leq \frac{\mathbb{E}_\pi\left(f(x_{k-1}(\theta),\theta) - f^*\right) + \mathbb{E}_\pi\left(f^* - f(x^\dagger_{k-1,\varepsilon}(\theta))\right)}{\rho_{\varepsilon}} \nonumber \\
        &\leq \frac{\varepsilon_{k-1} + \varepsilon - \varepsilon}{\rho_{\varepsilon}} \nonumber \\
        &\leq \frac{\varepsilon_{k-1}}{\rho_{\varepsilon}}.
    \end{align*}
    We can combine this inequality with $\eta_k = \frac{\varepsilon_0}{\alpha^k}$ and the bound on $t$ to find:
    \begin{equation}
        \frac{||x_{k-1}(\theta) - x^{\dagger}_{k-1, \varepsilon}(\theta)||_\pi^2}{2\eta_k t} \leq \frac{\varepsilon_{k-1}^2}{2\varepsilon_k \alpha^2} = \varepsilon_k \label{equation::THMSecond}
    \end{equation}
    Which we then combine with Equation \eqref{equation::THMPrelim} to find:
    \begin{equation*}
         \mathbb{E}\left(\mathbb{E}_\pi\left(f(x_k(\theta),\theta) - f(x^{\dagger}_{k-1, \varepsilon}(\theta),\theta)\right)\right) \leq \varepsilon_k
    \end{equation*}
    and then combine with the fact that $\mathbb{E}\left(\mathbb{E}_\pi\left(f^*_k - f(x^{\dagger}_{k-1, \varepsilon}(\theta),\theta)\right)\right) = \varepsilon$ to show Equation \eqref{equation::ToInduct} holds for all $K = 0,1,\ldots,K$.
    
    By the definition of $K$, we have that $\varepsilon_K \leq \varepsilon$, and so writing Equation \eqref{equation::ToInduct} for $k = K$ we find:
    \begin{equation*}
       \mathbb{E}\left( \mathbb{E}_\pi\left(f(x_K(\theta),\theta) - f^m_*\right) \right) \leq \varepsilon_K + \varepsilon \leq 2\varepsilon
    \end{equation*}
    as required.
\end{proof}

\subsection{Proof for Lemma \ref{lem::Bep}}
\begin{proof}
    The proof proceeds similarly to \cite{yang2018rsg}, except instead of their first Equation, we use our definition of convexity, subgradients, and the normal cone, thus giving:
    \begin{equation}
        \mathbb{E}_\pi\left(f(u^*(\theta),\theta) - f(u(\theta)\theta)\right) \geq \langle u^*(\theta) - u(\theta), g_u(\theta) + v_u(\theta) \rangle_\pi
    \end{equation}
    where $u \in \mathcal{L}_{\varepsilon}$, $u^*$ the closest optimal point to $u$, $g$ is a subgradient of $u$, and $v$ a vector in the normal cone. This implies, on taking Cauchy-Schwarz:
    \begin{equation}
        ||u^*(\theta) - u(\theta)||_\pi ||g_u(\theta) + v_u(\theta)|| \geq \mathbb{E}_\pi\left(f(u(\theta),\theta) - f(u^*(\theta)\theta)\right) = \varepsilon
    \end{equation}
    By the fact that $u$ is in $\mathcal{L}_\varepsilon$. After this, the remainder of the proof follows as in \cite{yang2018rsg}.
\end{proof}

\section{Overall Convergence Rate Proofs}
In this section, we give a proof for Theorem \ref{theorem::mainresult}. Before that, we need to discuss the set $\mathcal{S}_\varepsilon$ and the quantity $\rho_{\varepsilon}$ that we have been working with so far. We note that they also depend on the number of basis functions in the current loop
\subsection{Proof for Theorem \ref{theorem::mainresult}}
\begin{proof}
    The proof is very similar to that of Theorem \ref{theorem::subgradientfixed}, but the quantity we aim to bound by induction is slightly different, namely:
    \begin{equation*}
        \mathbb{E}_\pi\left( f(x_k(\theta),\theta) - f^*_k\right) \leq \varepsilon_k + \varepsilon
    \end{equation*}
    where $f^*_k$ is the optimum at $m_k$ basis functions. For $k = K_{i^*}$, this implies the Theorem by noting that we have $||f^*_k - f^*||_\pi \leq \varepsilon$ by the hypotheses of the Theorem.
    
    We consider the inductive step in Theorem \ref{theorem::subgradientfixed}. In the first case, where $x_{k-1} \in \mathcal{S}_{\varepsilon,k-1}$, the proof proceeds as in that Theorem. In the second case, it also proceeds similarly. The only delicate step to consider is the matter of $\rho$. If we use the value $\rho_n$ for $\rho$ in inner loop $n$, then Equation \eqref{equation::THMSecond} will read:
    \begin{equation*}
        \frac{||x_{k-1}(\theta) - x^{\dagger}_{k-1, \varepsilon}(\theta)||_\pi^2}{2\eta_k t} \leq \frac{\rho_{n+1}}{\rho_n}\frac{\varepsilon_{k-1}^2}{2\varepsilon_k \alpha^2} = \frac{\rho_{n+1}}{\rho_n}\varepsilon_k.
    \end{equation*}
    However, by the definition of $\rho$, this first factor is smaller than one, and therefore we can still find the bound:
    \begin{equation*}
         \mathbb{E}\left(\mathbb{E}_\pi\left(f(x_k(\theta),\theta) - f(x^{\dagger}_{k-1, \varepsilon}(\theta),\theta)\right)\right) \leq \varepsilon_k
    \end{equation*}
    and thus we prove the Theorem by following the remainder of Theorem \ref{theorem::subgradientfixed}.
\end{proof}

\end{document}